\theoremstyle{plain}
\newtheorem{thm}{Theorem}[section]
\newtheorem{prop}[thm]{Proposition}
\newtheorem{lemma}[thm]{Lemma}
\theoremstyle{remark}
\theoremstyle{definition}
\newtheorem{rem}[thm]{Remark}
\newtheorem{remdef}[thm]{Remark-Definition}
\newtheorem{remsdefs}[thm]{Remarks-Definitions}
\newtheorem{notasdefis}[thm]{Notations\,-Definitions}
\title[Some Riemannian properties of $\mathbf{SU_n}$]{
Some Riemannian properties of $\mathbf{SU_n}$\\ endowed with a bi-invariant metric}
\author{Donato Pertici \and Alberto Dolcetti}
\begin{document}

\parindent 0pt
\selectlanguage{english}

\maketitle

\vspace*{-0.2in}

\begin{center}
{\scriptsize Dipartimento di Matematica  e Informatica, Viale Morgagni 67/a, 50134 Firenze, ITALIA

\vspace*{0.07in}

donato.pertici@unifi.it,  \   http://orcid.org/0000-0003-4667-9568

\vspace*{-0.03in}

alberto.dolcetti@unifi.it, \  http://orcid.org/0000-0001-9791-8122

}

\end{center}


\vspace*{-0.1in}

\begin{abstract}
We study some properties of $SU_n$ endowed with the Frobenius metric   $\phi$, which is, up to a positive constant multiple, the unique bi-invariant Riemannian metric on $SU_n$. In particular we  express the distance between $P, Q \in SU_n$ in terms of eigenvalues of $P^*Q$; we compute the diameter of $(SU_n, \phi)$ and we  determine its diametral pairs; we prove that  the set of all minimizing geodesic segments with endpoints $P$, $Q$ can be parametrized by means of a compact connected submanifold of $\mathfrak{su}_n$,  diffeomorphic to a suitable
complex Grassmannian depending on $P$ and $Q$.

\end{abstract}


{\small \tableofcontents}

\renewcommand{\thefootnote}{\fnsymbol{footnote}}

\renewcommand{\thefootnote}{\arabic{footnote}}
\setcounter{footnote}{0}

\vspace*{-0.3in}

{\small {\scshape{Keywords.}} Special unitary group, Frobenius metric, bi-invariant metrics on a compact Lie group, minimizing geodesics, homogeneous space, complex Grassmannian, diameter, diametral pairs,
generalized principal logarithm; 
\smallskip

{\small {\scshape{Mathematics~Subject~Classification~(2020):}}  53C35, 15B30, 22E15.

{\small {\scshape{Grants:}}
supported by GNSAGA-INdAM and by MUR-PRIN project ``Multilinear Algebraic Geometry''.

\section*{Introduction}

Let $G$ be a closed subgroup of the unitary group $U_n$ and let $\mathfrak{g} \subseteq \mathfrak{u}_n$ be its Lie algebra. It is natural to consider on $G$ the \emph{Frobenius} (or \emph{Hilbert-Schmidt}) \emph{metric} $\phi$ induced on $G$ by the Euclidean metric of $\mathbb{C}^{n^2}$ (identified with $\mathfrak{gl}_n(\mathbb{C})$: the Lie algebra of complex square matrices of order $n$). Since $G \subseteq U_n$, the metric $\phi$ is bi-invariant on $G$. Furthermore, if $G$ is also absolutely simple and connected (e.g. the classical groups $SO_n$ for $n \ge 3$ and $n \ne 4$, $SU_n$ for $n \ge 2$ and $Sp_n$ for $n \ge 1$), every bi-invariant $(0, 2)$-tensor on $G$ (such as the Killing metric) is a constant
multiple of $\phi$ (see, for instance, \cite[Prop.\,1.8]{DoPe2023}). In particular, all bi-invariant $(0, 2)$-tensors on such groups are Riemannnian or anti-Riemannian metrics and have the same geodesics and the same isometries.

In \cite[Thm.\,2.3]{DoPe2023} we determined all isometries of any absolutely simple, compact, connected, real Lie group endowed with any bi-invariant metric; from that result, we deduced the list of all isometries of $SO_n$ ($n \ge 3$, $n \ne 4$), $SU_n$ ($n \ge 2$) and $Sp_n$ ($n \ge 1$), endowed with any bi-invariant metric (\cite[Thm.\,2.5\,(c),\,(d)]{DoPe2023}). Furthermore, by means of suitable different methods, we determined the list of all isometries also of $SO(4)$ and of $U(n)$ ($n \ge 2$), all endowed with the Frobenius metric (\cite[Thm.\,3.5, Thm.\,4.7]{DoPe2023}).

On the other hand, in \cite{PD2022}, we studied the Riemannian properties of many suitable subgroups $G$ of the unitary group $U_n$ (we called them  SVD-closed) endowed with the Frobenius metric $\phi$.
In particular, we expressed   the distance between points and we parametrized the set of all minimizing geodesic segments of $(G, \phi)$ with arbitrary endpoints $P, Q$,
by means of a set, denoted by $\mathfrak{g}$--$plog (P^*Q)$, consisting of generalized principal logarithms of $P^*Q$  belonging to $\mathfrak{g}$; we proved that this last set is non-empty, it is a union of finitely many compact submanifolds  of the Lie algebra $\mathfrak{u}_n$ and that each of these submanifolds is diffeomorphic to a suitable homogeneous space (\cite[Thm.\,5.7 and Thm.\,6.5]{PD2022}). For a lot of families of SVD-closed subgroups of $U_n$ (among them $U_n$, $SO_n$, $Sp_n$ and others) we also determined the (Frobenius) diameter of each subgroup and the structure of each connected component of $\mathfrak{g}$--$plog (P^*Q)$ as a homogeneous space (\cite[Prop.\,6.7 and \S\,7, \S\,8]{PD2022}).

Most classical subgroups of $U_n$ are SVD-closed with the exception of $SU_n$, when $n \ge 3$.
For this reason, in this paper we try to obtain similar results for $(SU_n, \phi)$.
In particular, we are able to  express the distance between  points, to compute the diameter of $(SU_n, \phi)$, to  determine its diametral pairs (i.e. the pairs of points whose distance is equal to the diameter) and to  parametrize the set of all minimizing geodesic segments with endpoints $P$, $Q$\,, 
by means of a compact connected submanifold of $\mathfrak{su}_n$, which in general is not equal to $\mathfrak{su}_n$\!--\,$plog (P^*Q)$), but it is always diffeomorphic to a suitable
complex Grassmannian depending on $P$ and $Q$ (Theorem \ref{distanza} and Theorem \ref{diametral-points}).
Since we already studied generalized principal logarithms of some different types of matrices in \cite{DoPe2018}, \cite{PD2022}, \cite{Pe2023}, we also determine the conditions on $Q \in SU_n$, so that the set $\mathfrak{su}_n$\!--\,$plog (Q)$ is  non-empty (Proposition \ref{sunlog}).

The euclidean metric on $\mathfrak{gl}_n(\mathbb{C})$ can be also restricted to the Lie group $GL_n(\mathbb{C})$ of invertible complex matrices; this restriction is a Riemannian metric on $GL_n(\mathbb{C})$, but it is not bi-invariant. On the other hand $GL_n(\mathbb{C})$ can be endowed, in a natural way, with a semi-Riemannian metric, which, on the contrary,  is bi-invariant: the so-called \emph{trace metric} $g$,
 defined, at the identity matrix, by $g(X,Y) := Re(Tr(XY))$  (the real part of the trace of the product $XY$) for every $X, Y \in \mathfrak{gl}_n(\mathbb{C})$. We remark that, for any closed subgroup $G$ of $U_n$, the restriction to $G$ of the trace metric $g$ agrees with the anti-Riemannian metric $-\phi$ (the opposite of the Frobenius metric of $G$) and hence it determines on $G$ the same Riemannian structure of $\phi$.

The restrictions of the trace metric define  Riemannian structures on the space of positive definite hermitian matrices and on the space of positive definite symmetric real matrices. Such manifolds have a remarkable interest in many frameworks of both pure and applied mathematics (see for instance, among many others,
\cite{BridHaef1999}, \cite{Mo2005}, \cite{Bha2007},  \cite{bar2008}, 
\cite{MoZ2011},

 \cite{NieBha2013},
 \cite{NieBar2019},
   \cite{DoPe2019},
   
 \cite{DoPe2021},
 \cite{CBG2021},
 \cite{Bou2023},
 
   \cite{CMB2023}, 
      \cite{Nie2023a}, \cite{Nie2023b},
      
    \cite{ZZS2023}, \cite{Nieu2024}). 
We have also studied the restrictions of the trace metric on other submanifolds of $GL_n(\mathbb{C})$, where they induce semi-Riemannian (but not Riemannian) structures 
(see \cite{DoPe2015},  \cite{DoPe2019},

 \cite{DoPe2020}).

The present paper is organized as follows.

In Section 1, we recall the main notations and the basic facts used in this paper. We introduce the Frobenius metric $\phi$ on any closed connected subgroup $G$ of $U_n$, having $\mathfrak{g} \subseteq \mathfrak{u}_n$ as Lie algebra. In particular, we observe that (Proposition \ref{distanza-punti}):

(i) for every $P, Q \in G$, the (Frobenius) distance $d(P,Q)$ is equal to the minimum of the (Frobenius) norm $\Vert X \Vert_{_\phi}$, where $X \in \mathfrak{g}$ is a logarithms of $P^*Q$; 

(ii) the map: $X \mapsto \gamma(t):= P \exp(t X)\ \ (0 \leq t \leq 1)$ \ is a bijection from  the set 

$\{ X \in \mathfrak{g} \, : \,  \exp(X) = P^*Q \mbox{ and } \Vert X \Vert_\phi = d(P,Q) \}$
onto the set of minimizing geodesic segments of \ $(G, \phi)$ with endpoints $P$ and $Q$. 

Next Sections are devoted to study some Riemannian properties of the group $SU_n$ endowed with the Frobenius metric $\phi$.

Aim of Section 2 is to find an explicit  expression of the function

 $m(Q):=\min\{ \Vert X \Vert_{_\phi}^2 \ : \ X \in \mathfrak{su}_n \mbox{ and } \exp(X)=Q \} = d(I_n, Q)^2$ for any  matrix $Q \in SU_n$ ($I_n$ is the identity matrix); the value of $m(Q)$ is obtained in terms of the eigenvalues of $Q$ (Proposition \ref{m-valutation}).

In Section 3, for every $Q \in SU_n$, we analyse the set 

$\Theta(Q):= \{X\in \mathfrak{su}_n : \exp(X)=Q\,, \  \Vert X \Vert_{_\phi}^2= m(Q) =d(I_n, Q)^2\}$, which (as stated above) parametrizes the set of minimizing geodesic segments of \ $(SU_n, \phi)$ with endpoints $I_n$ and $Q$.
 We prove that $\Theta(Q)$ is a compact connected submanifold of $\mathfrak{su}_n$, diffeomorphic to a suitable complex Grassmannian (Proposition \ref{Theta(Q)}). Furthermore we also determine the geometric structure of
$\mathfrak{su}_n$\!--\,$plog (Q)$  (Proposition
\ref{sunlog}).

In Section 4, we obtain some fundamental results concerning the diameter of $(SU_n, \phi)$ and its diametral pairs  (Proposition \ref{diameter}).

In the last Section 5, by means of the Propositions proved in Sections 2, 3, 4, we collect and prove the main results of this paper: the already mentioned Theorems \ref{distanza} and \ref{diametral-points}.

\section{Preliminary facts}

\begin{notasdefis}\label{notazioni}
a) We denote by $\mathbb{C}$ the field of complex numbers, by $\textbf{i}$ its imaginary unit,  by $\mathbb{R}$ the field of real numbers and by $\mathbb{Z}$ the ring of integers. The integer part of any $x \in \mathbb{R}$ is denoted by $\lfloor x \rfloor$.
We denote by $|w|$ \,and \,$e^w := \sum\limits_{i=0}^{+ \infty} \dfrac{w^i}{i!}\,,$  respectively, the modulus and the exponential of an arbitrary complex number $w$, while, for any $z \in \mathbb{C} \setminus \{0\}$, we denote by $\arg(z) \in (- \pi , \pi]$ the \emph{principal value of the argument} of $z$ and by  $\log(z):= \ln|z|+ \arg(z) \textbf{i}$ the  \emph{principal value of the logarithm} of $z$. Clearly $\log(z)$ is the unique complex logarithm of $z$, whose imaginary part lies in the interval $(- \pi , \pi]$. 
Note that, when $|z|=1$, we have  \ $\log(z)=\arg(z) \textbf{i}$, so, in particular, $\log(-1)=\pi \textbf{i}\,.$ Of course, if $|z|=|w|=1$, we have $z=w$ if and only if $\arg(z)=\arg(w)$ \,(i.e. $\log(z)=\log(w)$).

\smallskip

b) We denote by \ $GL_n(\mathbb{C})$ \ (with $n\ge 1$) \ the Lie group of invertible complex square matrices of order $n$, by
\ $\mathfrak{gl}_n (\mathbb{C})$ its Lie algebra consisting of all $n\times n$ complex matrices and by $I_{_n}$ the identity matrix of $GL_n(\mathbb{C})$. 
For every $A \in \mathfrak{gl}_n(\mathbb{C})$, \ $A^T$, $\overline{A}$, $A^{*} := \overline{A}^T$ and $A^{-1}$ (provided that $A$ is invertible) are respectively transpose, conjugate, adjoint and inverse of the matrix $A$, tr$(A)$ is its trace, $\det(A)$ is its determinant, while $\exp(A):= \sum\limits_{i=0}^{+ \infty} \dfrac{A^i}{i!}\in GL_n(\mathbb{C})$ denotes the exponential of $A$. It is clear that, if $\lambda_1, \cdots , \lambda_n$ are the $n$ eigenvalues of $A \in \mathfrak{gl}_n(\mathbb{C})$, then $e^{\lambda_1}, \cdots , e^{\lambda_n}$ are the $n$ eigenvalues of $\exp(A)$. For convenience, we agree that a complex number $\eta$ is an \emph{eigenvalue of multiplicity} $0$ of a matrix $A \in \mathfrak{gl}_n(\mathbb{C})$ if $\eta$ is not a usual eigenvalue of $A$, i.e. if $\eta$ is not a root of the characteristic polynomial of $A$.

For any integer $n \geq 1$, we denote by
\ $U_n:=\{A \in \mathfrak{gl}_n (\mathbb{C}): AA^*=I_{_n}\}$ \ the unitary group of degree $n$ and by \ $\mathfrak{u}_n:= \{X \in\mathfrak{gl}_n (\mathbb{C}): X=-X^*\}$ \ its Lie algebra of skew-hermitian matrices; we also denote by \ $SU_n:=\{A \in U_n: \det(A)=1\}$ \ the special unitary group of degree $n$ and by \ $\mathfrak{su}_n:= \{X \in\mathfrak{u}_n : \mbox{tr}(X)=0\}$ \ its Lie algebra. Finally, for any $U \in U_n$\,, we denote by $Ad_{_U}: \mathfrak{gl}_n (\mathbb{C}) \to \mathfrak{gl}_n (\mathbb{C})$ the map defined by $Ad_{_U}(X):=UXU^*$ (for every $X \in \mathfrak{gl}_n (\mathbb{C})$) and we still denote by $Ad_{_U}$ the restriction of this map to any subset of $\mathfrak{gl}_n (\mathbb{C})\,;$ so we can write both $Ad_{_U}: SU_n \to SU_n$ and $Ad_{_U}: \mathfrak{su}_n \to \mathfrak{su}_n\,.$ Note that $Ad_{_U}$ commutes with the exponential map.

\smallskip

c) Let $G$ be a closed connected subgroup of $GL_n(\mathbb{C})$ having $\mathfrak{g}\subset \mathfrak{gl}_n (\mathbb{C})$ as its Lie algebra. If $B \in G, \ A \in \mathfrak{g}$ and $\exp(A)=B$, we say that  $A$ is a $\mathfrak{g}$--\emph{logarithm} of $B$. Moreover, if $A$ is a $\mathfrak{g}$--logarithm of $B$ such that \ $- \pi \le Im(\lambda) \le\pi$ \ for every eigenvalue $\lambda$ of $A$, we say that $A$ is a  \emph{generalized principal} $\mathfrak{g}$--\!\!\!\emph{ logarithm} of $B$. 

We denote by \ $\mathfrak{g}$--$plog(B)$ \ the set of all generalized principal $\mathfrak{g}$--logarithms of $B \in G$.

Recall that, if $G$ is also compact, the exponential map $\exp: \mathfrak{g} \to G$ is surjective (see, for instance,  \cite[Thm.\,5.12, p.\,102]{Sepa2007}).

\smallskip

d) If $B_1, \cdots , B_t$ are square matrices (of various orders), then $\bigoplus\limits_{j=1}^t B_j=B_1 \oplus \cdots \oplus B_t$ denotes the block diagonal square matrix with $B_1, \cdots , B_t$ on its diagonal. 

Clearly we have \ $\exp(\bigoplus\limits_{j=1}^t B_j)= \bigoplus\limits_{j=1}^t \exp(B_j)$ , for every $ B_1, \cdots , B_t\,$.

If $\mathcal{S}_1, \dots , \mathcal{S}_t$ are sets of square matrices, then  $\bigoplus\limits_{j=1}^t \mathcal{S}_j=\mathcal{S}_1 \oplus \dots \oplus \mathcal{S}_t$ denotes the set of all matrices $\bigoplus\limits_{j=1}^t B_j\,,$ with $B_j \in \mathcal{S}_j$\ \, for every $j$.
 \end{notasdefis} 

\begin{lemma}\label{commutazione}
Let $n_1,\cdots, n_t$ be positive integers such that $n=\sum\limits_{j=1}^t n_j$ and let $\lambda_1,\cdots, \lambda_t$ be distinct complex numbers. Set $A:=\bigoplus\limits_{j=1}^t \lambda_j I_{_{n_j}}\,.$ Then a matrix $B\in U_n$ commutes with $A$ if and only if \ $B\in \bigoplus\limits_{j=1}^t U_{n_j}\,.$
\end{lemma}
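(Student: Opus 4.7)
The plan is to prove the two implications separately and observe that both are essentially linear-algebraic; no serious obstacle is expected.

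For the easy direction, I would assume $B \in \bigoplus_{j=1}^t U_{n_j}$, so $B = \bigoplus_{j=1}^t B_j$ with $B_j \in U_{n_j}$. Since $A = \bigoplus_{j=1}^t \lambda_j I_{n_j}$ is also block diagonal with the same block sizes, the products $AB$ and $BA$ are both block diagonal with $j$-th block $\lambda_j B_j$, hence $AB = BA$.

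For the converse, I would assume $B \in U_n$ commutes with $A$ and decompose $B$ into blocks $B = (B_{jk})_{1 \le j,k \le t}$, where $B_{jk}$ is of size $n_j \times n_k$ according to the same partition used for $A$. The matrix product yields $AB = (\lambda_j B_{jk})_{j,k}$ and $BA = (\lambda_k B_{jk})_{j,k}$. From $AB = BA$ I would read off $(\lambda_j - \lambda_k) B_{jk} = 0$ for all $j, k$, and then use the hypothesis that the $\lambda_i$ are pairwise distinct to conclude $B_{jk} = 0$ for $j \ne k$. Thus $B = \bigoplus_{j=1}^t B_{jj}$ is block diagonal.

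It remains to show each diagonal block $B_{jj}$ is unitary. I would use $B^* B = I_n$: since $B$ is block diagonal, $B^* B = \bigoplus_{j=1}^t B_{jj}^* B_{jj}$, so comparing with $I_n = \bigoplus_{j=1}^t I_{n_j}$ gives $B_{jj}^* B_{jj} = I_{n_j}$, i.e. $B_{jj} \in U_{n_j}$, which concludes the proof. The whole argument is routine; the only point requiring care is the bookkeeping of block sizes and the use of the distinctness of the $\lambda_j$'s to kill the off-diagonal blocks.
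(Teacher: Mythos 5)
Your proof is correct. Note that the paper itself does not prove Lemma \ref{commutazione} but simply refers to \cite[Lemma\,1.3]{Pe2023}; your block-decomposition argument (distinctness of the $\lambda_j$ killing the off-diagonal blocks, then $B^*B=I_n$ giving unitarity of each diagonal block) is the standard self-contained proof one would expect to find there, and all steps check out.
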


For a proof of the Lemma, see for instance \cite[Lemma\,1.3]{Pe2023}.

\begin{remdef}\label{frob}
We denote by ${\phi}$  the \emph{Frobenius} (or \emph{Hilbert-Schmidt}) \emph{positive definite  real scalar product} on $\mathfrak{gl}_n (\mathbb{C})$ defined by ${\phi}(A, B) := \mbox{Re}(\mbox{tr}(A B ^*))$, and we denote by $\Vert A \Vert_{_\phi} := \sqrt{{\phi}(A, A)} = \sqrt{\mbox{tr}(AA^*)}$ the related \emph{Frobenius norm}. Note that, if $A \in \mathfrak{u}_n$, then 
$\Vert A \Vert_{_\phi}^2 =  -\mbox{tr}(A^2)$. Since the eigenvalues of the skew-hermitian matrix $A$ are purely imaginary, we also get $\Vert A \Vert_{_\phi} = \sqrt{-\mbox{tr}(A^2)} = \sqrt{\sum\limits_{j=1}^n |\lambda_{_j}|^2}$, where $\lambda_{_1} , \cdots , \lambda_{_n}$ are the $n$ (possibly repeated) eigenvalues of $A$.
For any arbitrary closed subgroup $G$ of $U_n$, we still denote by $\phi$ the Riemannian metric on $G$, obtained by restriction of the Frobenius scalar product of $\mathfrak{gl}_n (\mathbb{C})$. It is easy to check that the metric $\phi$ (called the \emph{Frobenius metric} of $G$) is bi-invariant on $G$ and we have $\phi_{_A}(X, Y) = -\mbox{tr}(A^* X A^* Y)$, for any pair $X, Y$ of tangent vectors to $G$ at $A$ and for any $A \in G$. We denote by $d:=d_{_{(G,\phi)}}$ the distance on $G$ induced by $\phi$ and by $\delta (G, \phi) : = \sup \{d(P,Q) \, : \, P, Q \in G \}$ the \emph{diameter} of $G$ with respect to $d$. Of course $\delta (G, \phi) = d(A, B) < +\infty$, for some $A, B \in G$, because $G$ is compact.

 If $A,B \in G$ and $d(A, B)=\delta (G, \phi)$, we say that $A$ and $B$ are \emph{diametral in} $(G, \phi)$ or, equivalently, that they form a \emph{diametral pair of} $(G, \phi)$. Since $\phi$ is bi-invariant on the compact Lie group $G\subset U_n\,,$ \,it is clear that every point of $G$ belongs at least to a diametral pair of $(G, \phi)$. For some results on  diametral pairs in real orthogonal groups, see \cite[Prop.\,4.18]{DoPe2018}.
\end{remdef}

\begin{prop}\label{Levi-Civita_geod_Un} Let $G$ be a closed subgroup of $U_n$ and let $\mathfrak{g} \subseteq \mathfrak{u}_n$ be its Lie algebra. Then $(G, \phi)$ is a globally symmetric Riemannian manifold with non-negative sectional curvature, whose Levi-Civita connection agrees with the $0$-connection of Cartan-Schouten of $G$. The geodesics of $(G, \phi)$ are the curves \ 
$t \mapsto P\exp({t X})\ (\mbox{with}\ t \in \mathbb{R})$,\ 
for every $X \in \mathfrak{g}$ and $P \in G$; \ furthermore $(G, \phi)$ is a totally geodesic submanifold of $(U_n, \phi)$.
\end{prop}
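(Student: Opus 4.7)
The plan is to leverage the bi-invariance of $\phi$ on $G$ (already noted in Remark-Definition \ref{frob}) in order to apply the classical theory of compact Lie groups endowed with bi-invariant metrics. The key structural identity is that, for any two left-invariant vector fields $\widetilde{X}, \widetilde{Y}$ on $G$, the Levi-Civita connection of $\phi$ satisfies $\nabla_{\widetilde{X}}\widetilde{Y} = \tfrac{1}{2}[\widetilde{X}, \widetilde{Y}]$. I would verify this by plugging into Koszul's formula and exploiting bi-invariance: both $\phi(\widetilde{X},\widetilde{Y})$ is constant on $G$ (killing the first three Koszul terms) and the adjoint-invariance $\phi([\widetilde{Z},\widetilde{X}],\widetilde{Y})+\phi(\widetilde{X},[\widetilde{Z},\widetilde{Y}])=0$ makes two of the bracket terms cancel. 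This identity is precisely the definition of the Cartan-Schouten $0$-connection, giving the second assertion.

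From $\nabla_{\widetilde{X}}\widetilde{X}=0$ it follows that the one-parameter subgroups $t\mapsto \exp(tX)$, with $X\in \mathfrak{g}$, are the geodesics of $(G,\phi)$ issuing from $I_n$; applying the isometric left-translation by $P\in G$ yields the stated form $t\mapsto P\exp(tX)$ for every geodesic. For the non-negativity of the sectional curvature, I would compute the curvature tensor on left-invariant fields: from the formula for $\nabla$ one obtains $R(\widetilde{X},\widetilde{Y})\widetilde{Z}=\tfrac{1}{4}[[\widetilde{X},\widetilde{Y}],\widetilde{Z}]$ and then, using again the adjoint-invariance of $\phi$, the sectional curvature on the plane spanned by an orthonormal pair $X,Y\in \mathfrak{g}$ equals $\tfrac{1}{4}\Vert [X,Y]\Vert_{_\phi}^2\ge 0$.

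For the globally symmetric property, I would exhibit explicitly, at each $P\in G$, the geodesic symmetry $\sigma_{_P}:G\to G$ defined by $\sigma_{_P}(Q):=PQ^{-1}P$. This map is a smooth involution of $G$, it fixes $P$ and reverses the geodesics $t\mapsto P\exp(tX)$ through $P$ (since $\sigma_{_P}(P\exp(tX))=P\exp(-tX)$); moreover it is an isometry because $\phi$ is bi-invariant and the inversion $Q\mapsto Q^{-1}$ is an isometry at $I_n$ (its differential at $I_n$ is $-\mathrm{id}_{\mathfrak{g}}$, which preserves $\phi$). Since such symmetries are globally defined for every $P$, $(G,\phi)$ is a globally symmetric Riemannian manifold.

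Finally, to prove that $(G,\phi)$ is totally geodesic in $(U_n,\phi)$, I would just observe that by the previous part (applied to $G$ and to $U_n$ itself), a geodesic of $(U_n,\phi)$ starting from a point $P\in G$ with initial velocity $PX$ tangent to $G$ (so that $X\in \mathfrak{g}$) has the form $t\mapsto P\exp(tX)$; since $\exp(tX)\in G$ for every $t\in\mathbb{R}$, this curve remains in $G$ for all times and coincides with the corresponding geodesic of $(G,\phi)$. There is no genuine obstacle here: everything is either a direct computation using bi-invariance or a standard fact that one could alternatively cite; the only care-point is to keep track of the fact that the metric on $G$ is the restriction of the ambient Euclidean metric on $\mathfrak{gl}_n(\mathbb{C})$, but this is built into the definition in Remark-Definition \ref{frob}.
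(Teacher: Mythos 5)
Your proposal is correct. Note that the paper does not prove this Proposition at all: it simply defers to \cite[\S\,2.2]{AlBet2015}, and what you have written is precisely the standard argument contained in such references (Koszul's formula plus bi-invariance giving $\nabla_{\widetilde{X}}\widetilde{Y}=\tfrac{1}{2}[\widetilde{X},\widetilde{Y}]$, i.e.\ the Cartan--Schouten $0$-connection; one-parameter subgroups as geodesics; the symmetry $\sigma_{_P}(Q)=PQ^{-1}P$; and the totally geodesic property from $\exp(\mathfrak{g})\subseteq G$). Two small care-points, neither a gap: the sign in $R(\widetilde{X},\widetilde{Y})\widetilde{Z}=\pm\tfrac{1}{4}[[\widetilde{X},\widetilde{Y}],\widetilde{Z}]$ depends on the curvature convention, but after pairing with $\phi$ and using ad-invariance the sectional curvature is $\tfrac{1}{4}\Vert[X,Y]\Vert_{_\phi}^2\ge 0$ either way; and to promote ``inversion is an isometry at $I_n$'' to a global statement you should note that its differential at any $Q$ is $-dL_{Q^{-1}}\circ dR_{Q^{-1}}$ (composition of translations and $-\mathrm{id}$), which is where bi-invariance is used — your sentence already implicitly does this.
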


For a proof of Proposition \ref{Levi-Civita_geod_Un}, we refer, for instance, to \cite[\S\,2.2]{AlBet2015}.

\begin{prop}\label{distanza-punti}
Let $G$ be a closed connected subgroup of $U_n$ and let $\mathfrak{g} \subseteq \mathfrak{u}_n$ be its Lie algebra. 
Then, for every $P, Q \in G$, the (Frobenius) distance $d(P, Q)$ is equal to the minimum of the set
\ $\{\ \Vert X \Vert_{_\phi} \ : \ X \mbox{\ is  a} \ \mathfrak{g}\mbox{--logarithm of}\   P^* Q\ \}$.
 
Furthermore, the map: $X \mapsto \gamma(t):= P \exp(t X)\ \ (0 \leq t \leq 1)$ \ is a bijection from  the set $\{X \, : \, X \mbox{ is a }\mathfrak{g}\mbox{--logarithm of }    P^* Q\, \mbox{ with } \Vert X \Vert_{_\phi}=d(P,Q) \}$ onto the set of minimizing geodesic segments of \ $(G, \phi)$ with endpoints $P$ and $Q$.
\end{prop}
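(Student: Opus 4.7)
The plan is first to reduce to the case $P = I_n$ by exploiting the bi-invariance of $\phi$. Left multiplication $L_{P^*}: G \to G$ is an isometry that sends $P$ to $I_n$ and $Q$ to $P^*Q$; moreover, it takes geodesic segments from $P$ to $Q$ bijectively onto geodesic segments from $I_n$ to $P^*Q$, and takes the set of $\mathfrak{g}$-logarithms of $P^*Q$ identically to itself. So it suffices to prove both statements with $P = I_n$ and $Q$ replaced by $P^*Q$.

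Next I would establish the inequality $d(I_n, Q) \le \Vert X \Vert_{_\phi}$ for every $\mathfrak{g}$-logarithm $X$ of $Q$. By Proposition \ref{Levi-Civita_geod_Un}, the curve $\gamma(t) := \exp(tX)$, $t\in [0,1]$, is a geodesic from $I_n$ to $\exp(X) = Q$. Its velocity $\gamma'(t) = \gamma(t) X$ is the left translation of $X\in T_{I_n}G$ by $\gamma(t)$, and since $\phi$ is bi-invariant (hence left-invariant), $\Vert \gamma'(t) \Vert_{_\phi}$ is constantly equal to $\Vert X \Vert_{_\phi}$. Thus the length of $\gamma$ is $\Vert X \Vert_{_\phi}$, bounding $d(I_n, Q)$ from above.

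I would then show the bound is attained. Since $G\subset U_n$ is compact, $(G, \phi)$ is a complete Riemannian manifold, so by Hopf-Rinow there exists a minimizing geodesic $\gamma$ from $I_n$ to $Q$, which we may parametrize on $[0,1]$ with $\Vert \gamma'(0)\Vert_{_\phi} = L(\gamma) = d(I_n, Q)$. By Proposition \ref{Levi-Civita_geod_Un}, $\gamma(t) = \exp(t X)$ for some $X \in \mathfrak{g}$; then $\exp(X) = Q$ (so $X$ is a $\mathfrak{g}$-logarithm of $Q$) and, by the length computation above, $\Vert X \Vert_{_\phi} = d(I_n, Q)$. This proves (i).

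For the bijection claim, the map $X \mapsto \gamma_X(t) := \exp(tX)$ is well defined: if $\Vert X \Vert_{_\phi} = d(I_n, Q)$ and $\exp(X) = Q$, then $\gamma_X$ is a geodesic from $I_n$ to $Q$ of length $\Vert X \Vert_{_\phi} = d(I_n, Q)$, hence minimizing. It is injective because $\gamma_X'(0) = X$ recovers $X$ from $\gamma_X$. For surjectivity, let $\sigma:[0,1]\to G$ be a minimizing geodesic segment from $I_n$ to $Q$; by Proposition \ref{Levi-Civita_geod_Un}, $\sigma(t) = \exp(t X)$ for some $X\in \mathfrak{g}$, so $\exp(X) = Q$ and $\Vert X \Vert_{_\phi} = L(\sigma) = d(I_n, Q)$. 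The main (mild) technical point is confirming that any minimizing geodesic segment from $I_n$ to $Q$ in $G$ can indeed be written on the parameter interval $[0,1]$ as $\exp(tX)$ for some $X\in\mathfrak{g}$; this follows from Proposition \ref{Levi-Civita_geod_Un} together with the standard fact that an affinely reparametrized geodesic is still a geodesic of the same image.
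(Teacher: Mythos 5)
Your argument is correct and follows essentially the same route as the paper's proof: you use the characterization of geodesics of $(G,\phi)$ as the curves $t \mapsto P\exp(tX)$ (Proposition \ref{Levi-Civita_geod_Un}), the computation that such a segment on $[0,1]$ has length $\Vert X \Vert_{_\phi}$, and the Hopf--Rinow theorem to guarantee a minimizing geodesic exists, exactly the ingredients of the paper's (much terser) proof. Your preliminary reduction to $P=I_n$ via left translation and the explicit verification of injectivity/surjectivity are just a more detailed write-up of the same idea, not a different method.
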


\begin{proof}
Any geodesic segment $\gamma$, joining $P$ and $Q$, can be parametrized by $\gamma(t) = P \exp({t X})$ ($t \in[0,1]$), with $X \in \mathfrak{g},\  \exp(X) =  P^*Q$, and its length is $\sqrt{-tr(X^2)}= \Vert X \Vert_{_\phi}$; 
so we conclude by means of the classical Hopf-Rinow theorem.
\end{proof}

\begin{rem} Let $G$ be as in Proposition \ref{distanza-punti}.
We resume some results obtained in \cite{PD2022}.

In Thm. 6.5, we proved that, if $G$ is  SVD-closed too (for definition see \S\,4 of that paper) and $P, Q \in G$, then

a) $
d(P, Q)= \sqrt{\sum\limits_{j=1}^n |\log(\mu_{_j})|^2}
$, \ \  where  \ $\mu_{_1}, \cdots , \mu_n$ \  are the $n$ eigenvalues of $P^*Q$;

b)  $\mathfrak{g}$--$plog (P^*Q) \ = \ \{X \, : \, X \mbox{ is a }\mathfrak{g}\mbox{--logarithm of }    P^* Q\, \mbox{ with } \Vert X \Vert_{_\phi}=d(P,Q) \}$ and so 

$\mathfrak{g}$--$plog (P^*Q)$ parametrizes the set of minimizing geodesic segments of \ $(G, \phi)$ with endpoints $P$ and $Q$.

In Prop.\,6.7 and Rem.\,6.8, we computed the diameter of suitable families of SVD-closed subgroups of $U_n$, among them all classical subgroups of $U_n$ with the exception of $SU_n$.

For the same families of SVD-closed subgroups $G$ of $U_n$, in Sections 7 and 8, we proved that, for any matrix $M \in G$, the set $\mathfrak{g}$--$plog (M)$  is a disjoint union of finitely many simply connected compact submanifolds of $\mathfrak{u}_n$, each of them is diffeomorphic to a symmetric (homogeneous) space, we explicitly determined.

As remarked in \cite[Rem.\,3.7]{PD2022}, $SU_n$ is not SVD-closed as soon as $n \ge 3$ and therefore the Riemannian manifold $(SU_n, \phi)$  was excluded from that study. Hence, the goal of the present paper is to research for similar results in case of $(SU_n, \phi)$.
\end{rem}

\section{About the distance in $(SU_n, \phi)$ of any matrix from $I_n$}

\begin{remsdefs}\label{prelim-Q-q}
a) Fix a matrix  $Q \in SU_n$ ($n \ge 2$), and let $\mu_1 \cdots , \mu_n$ be the (possibly repeated) eigenvalues of $Q$, so we have $\mu_1 \mu_2 \cdots \mu_n =1 $; \ it is also known that  \\$| \mu_1 | = | \mu_2 | = \cdots | \mu_n | = 1,$ 
\ so $\log(\mu_j) = \arg(\mu_j) \textbf{i}$, for $j = 1, \cdots , n$.
Up to reordering the eigenvalues, we can assume \ \ \ 
$- \pi < \arg(\mu_1) \le \arg(\mu_2) \le \cdots \le \arg(\mu_{n}) \le\pi .$
 
Since\ $1= \prod\limits_{j=1}^n e^{\log(\mu_j)}=e^{(\arg(\mu_1)+\cdots+\arg(\mu_n))\textbf{i}} ,$ we also have $\sum\limits_{j=1}^n \arg(\mu_j) = 2k\pi$, with $k \in \mathbb{Z}.$

We also denote by $s=s(Q) \ge 0$ the multiplicity of $-1$ as an eigenvalue of $Q$.

When $s \ge 1$ we have
$\arg(\mu_j)= \pi$ \ if and only if \ \ $n-s+1\le j \le n$, while if $s =0$ we have $\arg(\mu_j) \ne \pi$ \  for every $j= 1, \cdots , n$. 

\smallskip

b) With the same notations as above, the eigenvalues of the inverse $Q^*$ of $Q$ \ are \ $\overline{\mu}_1, \cdots, \overline{\mu}_n$. Hence, if $s=0$  we have $\arg(\overline{\mu}_j)=-\arg(\mu_j)$ for every $j=1, \cdots, n$, while if $s\ge 1$ we have $\arg(\overline{\mu}_j)=-\arg(\mu_j)$ for every $j=1, \cdots, n-s$\ \ and\ $\arg(\overline{\mu}_j)=\arg(\mu_j)=\pi$ \ for every $j=n-s+1, \cdots, n$. In particular, in any case we have 
$s(Q^*)=s(Q)=s$ and
$-\pi< \arg(\overline{\mu}_{n-s})\le \cdots \le \arg(\overline{\mu}_1)<\pi$; moreover, if $s \ge 1$ we have $\arg(\overline{\mu}_j)= \pi$ \ for every $j=n-s+1, \cdots , n$.

\smallskip

c) We set $\zeta(Q):= \dfrac{1}{2 \pi} \sum\limits_{j=1}^n \arg(\mu_j) $. 

From  (a),  $\zeta(Q)$ is an integer dependent only on the principal arguments of the eigenvalues of $Q$, while, from (b), it is easy to check that we have \ $\zeta(Q^*)=s(Q)-\zeta(Q)$.

 From this last equality it is easy to deduce that 
$s(Q) -\lfloor \dfrac{n}{2} \rfloor \le \zeta(Q) \le \lfloor \dfrac{n}{2} \rfloor\ $ and also that the assumption \ $\zeta(Q) \ge \zeta(Q^*)$ \ implies \ $\zeta(Q)\ge 0\,.$
\end{remsdefs}

\begin{remdef}\label{admissibility}
Consider any integer $\zeta$ such that \ $-\lfloor \dfrac{n}{2} \rfloor \le \zeta \le \lfloor \dfrac{n}{2} \rfloor\ $ \ ($n\ge 2$); we say that an $n$-tuple ($\alpha_1, \cdots, \alpha_n$) of real numbers is $\zeta$-\emph{admissible} if 

$-\pi<\alpha_1\le \alpha_2\le \cdots \le \alpha_n \le \pi$ \ and \ $\sum\limits_{j=1}^n \alpha_j = 2 \pi \zeta\ .$

As seen in Remarks-Definitions \ref{prelim-Q-q} (a) and (c), the eigenvalues $\mu_1, \cdots , \mu_n$ of any matrix $Q$ of $SU_n$ can be reordered so that the $n$-tuple ($\arg(\mu_1), \cdots , \arg(\mu_n)$) is $\zeta(Q)$-admissible, being $\zeta(Q)$ the integer belonging to $\big[\!-\!\lfloor \dfrac{n}{2} \rfloor , \lfloor \dfrac{n}{2} \rfloor\,\big]$ defined in Remarks-Definitions \ref{prelim-Q-q} (c).
Conversely, it is clear that, if ($\alpha_1, \cdots, \alpha_n$) is any $\zeta$-admissible $n$-tuple of real numbers (with $\zeta \in \big[\!-\!\lfloor \dfrac{n}{2} \rfloor , \lfloor \dfrac{n}{2} \rfloor\,\big] \cap \mathbb{Z}$), there is at least one matrix $Q \in SU_n$ \ whose $n$ eigenvalues are  $\mu_1=e^{\alpha_{\!1}\!\textbf{i}}, \cdots,\mu_n= e^{\alpha_{\!n}\!\textbf{i}}\,;$ so we have $\alpha_1=\arg(\mu_1), \cdots, \alpha_n=\arg(\mu_n)$ \ and \ $\zeta=\zeta(Q)\,.$ 
\end{remdef}

\begin{remsdefs}\label{def-m(Q} a) Let $m: SU_n \to \mathbb{R}$ be the function defined by  

$m(Q):=\inf \{ \Vert X \Vert_{_\phi}^2 \ : \ X \in \mathfrak{su}_n \mbox{ and } \exp(X)=Q \}$

and, for any $Q \in SU_n$, let

$\Theta(Q):= \{X\in \mathfrak{su}_n : \exp(X)=Q\,, \  \Vert X \Vert_{_\phi}^2= m(Q)\}$.

From Proposition \ref{distanza-punti}, it follows that, for every $Q \in SU_n\,$, the map $X \mapsto \Vert X \Vert_{_\phi}^2$ has an absolute minimum on the set of $\mathfrak{su}_n\mbox{--logarithms of } Q\,;$\, so, for any $Q \in SU_n\,,$ we can write

$m(Q)=\min \{ \Vert X \Vert_{_\phi}^2 \ : \ X \in \mathfrak{su}_n \mbox{ and } \exp(X)=Q \} = d(I_n,Q)^2 $ (where $d$ is the distance induced by the Frobenius metric of $SU_n$), so that the set $\Theta(Q)$ is non-empty.

Now, fix $Q \in SU_n$ ($n \ge 2$) and denote $\mu_1 \cdots , \mu_n$ the $n$ eigenvalues of $Q$, so that the $n$-tuple ($\arg(\mu_1) \cdots , \arg(\mu_n)$) is $\zeta(Q)$-admissible, with 
$\zeta(Q) \in \big[\!-\!\lfloor \dfrac{n}{2} \rfloor , \lfloor \dfrac{n}{2} \rfloor\,\big]\cap \mathbb{Z} $. 
We have

(*) \ \ \ \ $m(Q) \ge \sum\limits_{j=1}^n \big(\!\arg(\mu_j)\big)^2$.

Indeed by definition we have:
$m(Q) \ge \min\{ \Vert X \Vert_{_\phi}^2 \ : \ X \in \mathfrak{u}_n \mbox{ and } \exp(X)=Q \}$ and this last is equal to $\sum\limits_{j=1}^n \big(\!\arg(\mu_j)\big)^2$ \ \ (by \cite[Rem.\,5.4 and Prop.\,5.5]{PD2022}).

Note that, if $X$ is a $\mathfrak{su}_n$--logarithm of $Q$, then the $n$ eigenvalues of $X$ are of the form 

$(\arg(\mu_1) +2 k_1 \pi) \textbf{i} , \cdots , (\arg(\mu_n)+ 2 k_n\pi) \textbf{i}$, for some integers $k_1, \cdots, k_n$ such that 

$\sum\limits_{j=1}^n (\arg (\mu_j) + 2 k_j \pi) = 2 \pi \, \big( \zeta(Q) + \sum\limits_{j=1}^n k_j \big) =0$, \ \,i.e. \ $\sum\limits_{j=1}^n k_j = - \zeta(Q)\,.$

Conversely, if $h_1, \cdots, h_n \in \mathbb{Z}$ and  $\sum\limits_{j=1}^n h_j = - \zeta(Q)$ \ (i.e.
$\sum\limits_{j=1}^n (\arg (\mu_j) + 2 h_j \pi) =0$), it is easy to determine a matrix $X_0 \in \mathfrak{su}_n$ such that $\exp(X_0)= Q$, whose $n$ eigenvalues are 

$(\arg(\mu_1) +2 h_1 \pi) \textbf{i} , \cdots , (\arg(\mu_n)+ 2 h_n\pi) \textbf{i}$, and therefore 
$\Vert X_0 \Vert_{_\phi}^2=\sum\limits_{j=1}^n \big(\!\arg (\mu_j) + 2 h_j \pi \big)^2$.
Hence we get
\ $m(Q) = \min \{  \sum\limits_{j=1}^n \big(\!\arg (\mu_j) + 2 k_j \pi \big)^2 \ : \ k_1, \cdots k_n \in \mathbb{Z}  \mbox{ and } \sum\limits_{j=1}^n k_j = - \zeta(Q)\}$.

Let
$W_Q:= \{ \underline{k}:=(k_1, \cdots , k_n) \in \mathbb{Z}^n \ : \ \sum\limits_{j=1}^n k_j = -\zeta(Q)\}$ and  denote by 
$\psi_{_Q}: W_Q \to \mathbb{R}$ the map defined by \ $\psi_{_Q}(\underline{k})=\psi_{_Q}(k_1, \cdots , k_n) = \sum\limits_{j=1}^n \big(\!\arg(\mu_j) + 2 k_j \pi\big)^2$; 
\ then we can write

$m(Q) = \min \{ \psi_{_Q}(\underline{k}) \ : \  \underline{k} \in W_Q\}$.

Next, it will be also useful to consider the map \ $\Delta_{_Q}: W_Q \to \mathbb{Z}$ defined by 

$\Delta_{_Q}(\underline{k}) := \max\{k_1, \cdots , k_n\} - \min\{k_1, \cdots , k_n\}$,  \ with $\underline{k}=(k_1, \cdots , k_n) \in W_Q$,\ \ and  the set \ 
${Z_Q} := \{ \underline{k}  \in W_Q : \Delta_{_Q}(\underline{k}) \le 1\}$. 

Note that $Z_Q\ne \emptyset$. Indeed, if $\zeta=\zeta(Q) \ge 0$, then $ (\,\underbrace{0, \cdots , 0}_{n-\zeta}, \, \underbrace{-1, \cdots, -1}_{\zeta}\,) \in Z_Q$, while if $\zeta=\zeta(Q) < 0$, then $ (\, \underbrace{1, \cdots , 1 }_{-\zeta}, \, \underbrace{0, \cdots, 0}_{n+\zeta} \, ) \in Z_Q$.

\smallskip

b) Note that the map $m : Q \mapsto m(Q)$ depends only on the $\zeta(Q)$-admissible $n$-tuple $(\arg(\mu_1), \cdots, \arg(\mu_n))$, so we can  unambiguously write  \ $m(\arg(\mu_1), \cdots, \arg(\mu_n)):=m(Q)\,.$ 
From this fact and from Remark-Definition \ref{admissibility}, in the sequel we will also be able to consider $m$ as a function only of an arbitrary $\zeta$-admissible $n$-tuple $(\alpha_1, \cdots, \alpha_n)$ of real numbers such that $\zeta \in \big[\!-\!\lfloor \dfrac{n}{2} \rfloor , \lfloor \dfrac{n}{2} \rfloor\,\big] \cap \mathbb{Z}\,.$

\smallskip

c) Keeping in mind Remarks \ref{prelim-Q-q} (b), it is easy to check that we have $m(Q^*)=m(Q)$, for any $Q \in SU_n$, or equivalently, \ $m(\alpha_1, \cdots, \alpha_{n-s}, \underbrace{\pi, \cdots, \pi}_{s})=m(-\alpha_{n-s}, \cdots, -\alpha_{1}, \underbrace{\pi, \cdots, \pi}_{s}),$ for any $\zeta$-admissible $n$-tuple $(\alpha_1, \cdots, \alpha_{n-s}, \underbrace{\pi, \cdots, \pi}_{s})$, with $\alpha_{n-s} < \pi, \ s= 0, 1, \cdots , n$ and $\zeta \in \big[\!-\!\lfloor \dfrac{n}{2} \rfloor , \lfloor \dfrac{n}{2} \rfloor\,\big] \cap \,\mathbb{Z}\,.$
Also note that, if $\Psi$ is the automorphism of the vector space $\mathfrak{su}_n$ \,defined by $\Psi(X):=-X$ (for any $X \in \mathfrak{su}_n$), we have $\Psi\big(\Theta(Q^*)\big)=\Theta(Q)\,.$

\smallskip

d) The map \ $Q \mapsto m(Q)=d(Q, I_{_n})^2$ \ is continuous on the compact Lie group $SU_n$, so it has a maximum \ $\delta_n$ \ on $SU_n$. Since $(SU_n, \phi)$ is a homogeneous Riemannian manifold, it is clear that \ $\delta_n=\max \{m(Q): Q \in SU_n\}$ agrees with the square of the diameter of $SU_n$ with respect to $d$, \ i.e. \ $\delta(SU_n, \phi)=\sqrt{\delta_n}\ .$ \ Keeping in mind (b), (c) above, Remarks \ref{prelim-Q-q} (c) and Remark-Definition \ref{admissibility}, we deduce that
\ $\delta_n$ \ is the maximum of the set \ $\big\{m(\alpha_1, \cdots, \alpha_n): (\alpha_1, \cdots, \alpha_n) \in \mathbb{R}^n \mbox{\ is any $\zeta$-admissible $n$-tuple, with\ \,} \zeta \in \big[0 , \lfloor \dfrac{n}{2} \rfloor\,\big]\cap \mathbb{Z} \big\}\,.$
\end{remsdefs}

\begin{rem}\label{Adjoint-map-equalities}
For any $Q\in SU_n$ and $U\in U_n\,,$ we have $m\big(Ad_{_U}(Q)\big)=m(Q)$ and $\Theta\big(Ad_{_U}(Q)\big)=Ad_{_U}\big(\Theta(Q)\big)\,.$

The equality $m\big(Ad_{_U}(Q)\big)=m(Q)$ holds because the function $m(Q)$ depends only on the eigenvalues of $Q\,,$ while the second equality $\Theta\big(Ad_{_U}(Q)\big)=Ad_{_U}\big(\Theta(Q)\big)\,$ can be easily deduced from the first one,  remembering that the map $Ad_{_U}: (SU_n,\phi) \to (SU_n,\phi)$ is an isometry that commutes with the exponential map.

\end{rem}

\begin{lemma}\label{casi-zeta-intermedi}
Let $Q$ be a matrix of $SU_n$ $(n\ge 2)$, whose $n$ eigenvalues are $\mu_1, \cdots , \mu_n\,.$ Denote by \,$s(Q)$  the multiplicity of \ $-1$  as an eigenvalue of \,$Q$ \ and let \\ 
$\zeta(Q)=\dfrac{1}{2 \pi} \sum\limits_{j=1}^n \arg(\mu_j)\,,  
\ \ \  m(Q)=\min\{ \Vert X \Vert_{_\phi}^2 \ : \ X \in \mathfrak{su}_n \mbox{ and } \exp(X)=Q \}\,,$

$\Theta(Q)= \{X\in \mathfrak{su}_n : \exp(X)=Q\,, \  \Vert X \Vert_{_\phi}^2= m(Q)\}$. 

Then the following facts are equivalent:

i) $\mathfrak{su}_n$\!--\,$plog (Q) \ne \emptyset$;

ii) $\zeta(Q) \in \{0, 1, \cdots , s(Q)\}$;

iii) $m(Q) = \sum\limits_{j=1}^n \big(\!\arg(\mu_j)\big)^2$.

Moreover, if any of the above conditions holds,  then we have \ $\Theta(Q)=\mathfrak{su}_n$\!--\,$plog (Q)$.
\end{lemma}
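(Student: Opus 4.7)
The strategy is to reduce all three statements to a single combinatorial condition on the tuples $\underline{k} \in W_Q$ introduced above. I would begin with the following characterization: given $X \in \mathfrak{su}_n$ with $\exp(X)=Q$, corresponding to a tuple $\underline{k}=(k_1,\dots,k_n) \in W_Q$ (so that the eigenvalues of $X$ are $(\arg(\mu_j)+2k_j\pi)\mathbf{i}$), one has $X \in \mathfrak{su}_n$--$plog(Q)$ if and only if $\arg(\mu_j)+2k_j\pi \in [-\pi,\pi]$ for every $j$. Since $\arg(\mu_j) \in (-\pi,\pi]$, this constraint forces $k_j=0$ when $\arg(\mu_j) \ne \pi$ (i.e.\ for $j \le n-s(Q)$), and allows $k_j \in \{0,-1\}$ when $\arg(\mu_j)=\pi$ (i.e.\ for $j > n-s(Q)$). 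I will call such a tuple $\underline{k}$ \emph{admissible}.

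With this terminology in hand, the equivalence (i) $\Leftrightarrow$ (ii) becomes a simple counting argument: the set of possible values of $\sum_j k_j$ as $\underline{k}$ ranges over admissible tuples is exactly $\{0,-1,\dots,-s(Q)\}$, while the defining condition $\underline{k} \in W_Q$ is $\sum k_j = -\zeta(Q)$, so an admissible element of $W_Q$ exists precisely when $\zeta(Q) \in \{0,1,\dots,s(Q)\}$.

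The crux for (iii) is the elementary termwise inequality $(\alpha+2k\pi)^2 \ge \alpha^2$, valid for every $\alpha \in (-\pi,\pi]$ and every $k \in \mathbb{Z}$, with equality exactly when $\alpha+2k\pi \in [-\pi,\pi]$. Summing over $j$ gives $\psi_{_Q}(\underline{k}) \ge \sum_j (\arg(\mu_j))^2$ for every $\underline{k} \in W_Q$, with equality iff $\underline{k}$ is admissible. Therefore, assuming (ii), any admissible $\underline{k} \in W_Q$ (which exists by the previous paragraph) produces $m(Q)=\sum_j (\arg(\mu_j))^2$, giving (iii). Conversely, assuming (iii), any $X \in \Theta(Q)$ arises from some $\underline{k} \in W_Q$ with $\psi_{_Q}(\underline{k})=\sum_j (\arg(\mu_j))^2$; the equality case of the inequality forces $\underline{k}$ to be admissible, so $X \in \mathfrak{su}_n$--$plog(Q)$. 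This simultaneously shows that $\mathfrak{su}_n$--$plog(Q)$ is non-empty, proving (i), and that $\Theta(Q) \subseteq \mathfrak{su}_n$--$plog(Q)$. The reverse inclusion is immediate since every element of $\mathfrak{su}_n$--$plog(Q)$ comes from an admissible tuple and hence has squared norm $\sum_j (\arg(\mu_j))^2 = m(Q)$.

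The whole argument requires no hard estimate; the only point demanding real care is distinguishing the case $\arg(\mu_j)=\pi$, where \emph{both} $k_j=0$ and $k_j=-1$ give equality in the elementary inequality. It is precisely this double choice on the $s(Q)$ eigenvalues equal to $-1$ which produces the range $\{0,1,\dots,s(Q)\}$ for $\zeta(Q)$ in (ii) rather than just $\{0\}$, and which explains why the condition of the lemma is genuinely weaker than $\zeta(Q)=0$.
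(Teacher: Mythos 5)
Your proof is correct. It takes a genuinely more self-contained route than the paper's. The paper proves (i) $\Rightarrow$ (ii) by a trace computation on a fixed element of $\mathfrak{su}_n$--$plog(Q)$ (essentially your counting argument in matrix form), proves (ii) $\Rightarrow$ (iii) by exhibiting the tuple $(0,\cdots,0,-1,\cdots,-1)$ and then invoking the lower bound (*) of Remarks-Definitions \ref{def-m(Q}\,(a), which is itself imported from the $\mathfrak{u}_n$-results of \cite{PD2022}, and it delegates the equivalence (i) $\Leftrightarrow$ (iii) together with the final identity $\Theta(Q)=\mathfrak{su}_n$--$plog(Q)$ entirely to the proof of \cite[Prop.\,5.5\,(b)]{PD2022}. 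Your argument replaces all of these external ingredients by one elementary observation: the termwise inequality $(\alpha+2k\pi)^2\ge \alpha^2$ for $\alpha\in(-\pi,\pi]$, $k\in\mathbb{Z}$, whose equality case is exactly $\alpha+2k\pi\in[-\pi,\pi]$. Summed over $j$, this re-derives (*), identifies the equality case with membership in $\mathfrak{su}_n$--$plog(Q)$, and hence gives (ii) $\Rightarrow$ (iii), (iii) $\Rightarrow$ (i), and both inclusions $\Theta(Q)\subseteq \mathfrak{su}_n$--$plog(Q)\subseteq\Theta(Q)$ in one stroke. What the paper's route buys is brevity by reuse of \cite{PD2022}; what yours buys is a proof readable entirely within the framework of Remarks-Definitions \ref{def-m(Q}\,(a). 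One small point to keep explicit: in the direction (ii) $\Rightarrow$ (i)/(iii) you need that an admissible tuple in $W_Q$ is actually realized by some $X_0\in\mathfrak{su}_n$ with $\exp(X_0)=Q$; this is exactly the ``conversely'' statement of Remarks-Definitions \ref{def-m(Q}\,(a), so it is legitimate to use, but it should be cited rather than left implicit.
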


\begin{proof}
The equivalence between (i) and (iii) and the final assertion follow directly from the proof of \cite[Prop.\,5.5\,(b)]{PD2022}. 

Set $s:= s(Q)$ and $\zeta:=\zeta(Q)$. We can assume $\ \arg(\mu_1) \le \arg(\mu_2) \le \cdots \le \arg(\mu_{n})\,;$  so we have \,$\mu_{n-s+1}= \mu_{n-s+2}= \cdots =\mu_n =-1$, while \ $\mu_1, \cdots , \mu_{n-s}$ \ are all different from $-1$.

Assume now (i) and fix a matrix $X \in \mathfrak{su}_n$\!--\,$plog (Q)$ so that  the $n$ eigenvalues of $X$ are: $\arg(\mu_1){\bf i}, \cdots , \arg(\mu_{n-s}){\bf i}$, \ \  $-\pi {\bf i} $ with multiplicity $k \ge 0$ and $\pi {\bf i}$ with multiplicity $s-k \ge 0$, for some $k \in \{0, \cdots , s\}$. The condition $\mbox{tr}(X)=0$ implies $\sum\limits_{j=1}^{n-s} \arg(\mu_j) + (s -2k) \pi =0 $. Since $\zeta= \dfrac{1}{2\pi}\sum\limits_{j=1}^n \arg(\mu_j) = \dfrac{1}{2\pi} \big( \sum\limits_{j=1}^{n-s} \arg(\mu_j) + s \pi\big)$, we obtain $\zeta=k \in \{0,  1 , \cdots, s\}$ and so (i) implies (ii).

Assume now (ii) (i.e. $\zeta \in \{0, 1, \cdots , s\}$). By Remarks-Definition \ref{def-m(Q} (a), we have

$m(Q) = \min \{  \sum\limits_{j=1}^n \big(\!\arg (\mu_j) + 2 k_j \pi\big)^2 \ : \ k_1, \cdots k_n \in \mathbb{Z} \ \mbox{ and } \  \sum\limits_{j=1}^n  k_j  =-\zeta\}$. \\Choose $h_1 = \cdots h_{n-\zeta} =0$ and $h_{n-\zeta+1} = h_{n-\zeta+2} = \cdots = h_n = -1$, so that $\sum\limits_{j=1}^n  h_j  =-\zeta$. Since $\zeta \le s$, we get $\sum\limits_{j=1}^n \big(\!\arg (\mu_j) + 2 h_j \pi\big)^2 = \sum\limits_{j=1}^{n-\zeta} \big(\!\arg(\mu_j)\big)^2 + \zeta(-\pi)^2 = \sum\limits_{j=1}^n \big(\!\arg(\mu_j)\big)^2$. This gives $m(Q) \le \sum\limits_{j=1}^n \big(\!\arg(\mu_j)\big)^2$. The equality follows from the inequality (*) of Remarks-Definitions \ref{def-m(Q}\,(a).  Thus (ii) implies (iii) and the proof  is complete.
\end{proof}

\begin{lemma}\label{min-su-W0}
With the same notations as in Remarks-Definitions  \ref{def-m(Q} (a), we have

$\psi_{_Q}(\underline{k}) > m(Q)$  \ for every $\underline{k} \in W_Q \setminus Z_Q$.\ \ Therefore \ 
$m(Q) = \min \{ \psi_{_Q}(\underline{k}) \ : \  \underline{k} \in Z_Q\}$.
\end{lemma}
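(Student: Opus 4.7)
My plan is to argue by a local swap: if $\underline{k} \in W_Q \setminus Z_Q$, I will construct a new element $\underline{k}' \in W_Q$ with $\psi_{_Q}(\underline{k}') < \psi_{_Q}(\underline{k})$, so $\underline{k}$ cannot realize the minimum. This immediately gives $\psi_{_Q}(\underline{k}) > m(Q)$ and the identity $m(Q) = \min\{\psi_{_Q}(\underline{k}) : \underline{k} \in Z_Q\}$ (noting $Z_Q \ne \emptyset$, as already remarked).

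More precisely, let $\underline{k} = (k_1, \ldots, k_n) \in W_Q \setminus Z_Q$, so $\Delta_{_Q}(\underline{k}) \ge 2$. Choose indices $a,b$ with $k_a = \max_j k_j$ and $k_b = \min_j k_j$, hence $k_a - k_b \ge 2$. Define $\underline{k}'$ by $k'_a := k_a - 1$, $k'_b := k_b + 1$, and $k'_j := k_j$ for $j \ne a,b$. Since $\sum_j k'_j = \sum_j k_j = -\zeta(Q)$, we have $\underline{k}' \in W_Q$.

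The key step is the computation
\begin{equation*}
\psi_{_Q}(\underline{k}') - \psi_{_Q}(\underline{k}) = 4\pi\,\bigl[(\arg(\mu_b) - \arg(\mu_a)) + 2\pi(k_b - k_a + 1)\bigr],
\end{equation*}
obtained by expanding both squared differences. Here is where the hypotheses must be used sharply: from $-\pi < \arg(\mu_a)$ and $\arg(\mu_b) \le \pi$ one gets $\arg(\mu_b) - \arg(\mu_a) < 2\pi$ \emph{strictly}, while $k_b - k_a + 1 \le -1$ gives $2\pi(k_b - k_a + 1) \le -2\pi$. Adding these yields a strictly negative quantity, hence $\psi_{_Q}(\underline{k}') < \psi_{_Q}(\underline{k})$.

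The main (minor) obstacle is ensuring the strict inequality: if one only had $\arg(\mu_j) \in [-\pi,\pi]$ with both endpoints closed, the estimate would become $\le 0$ and the argument would fail to exclude $\underline{k}$ as a minimizer. The strict bound $-\pi < \arg(\mu_j)$ coming from the convention on the principal argument is exactly what is needed. From this the conclusion is immediate: every $\underline{k} \in W_Q \setminus Z_Q$ is dominated by some $\underline{k}' \in W_Q$, so $\psi_{_Q}(\underline{k}) > \min_{W_Q}\psi_{_Q} = m(Q)$, and therefore $m(Q) = \min\{\psi_{_Q}(\underline{k}) : \underline{k} \in Z_Q\}$.
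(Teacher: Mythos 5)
Your proof is correct and follows essentially the same route as the paper: both arguments take $\underline{k}$ with $\Delta_{_Q}(\underline{k})\ge 2$, decrease the maximal entry by one and increase the minimal entry by one, and derive a strict decrease of $\psi_{_Q}$ from $k_a-k_b\ge 2$ together with the strict bound $\arg(\mu_b)-\arg(\mu_a)<2\pi$ coming from the convention $\arg \in (-\pi,\pi]$. Your explicit formula for $\psi_{_Q}(\underline{k}')-\psi_{_Q}(\underline{k})$ is just the expanded form of the equivalent inequality used in the paper, so no further comment is needed.
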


\begin{proof} Remembering that $m(Q) = \min \{ \psi_{_Q}(\underline{k}) \ : \  \underline{k} \in W_Q\}$, it suffices to prove that, if $\underline{k} \in W_Q \setminus Z_Q$, then there exists $\underline{l} \in W_Q$ such that $\psi_{_Q}(\underline{k}) > \psi_{_Q}(\underline{l})$.

Let $\underline{k} = (k_1, \cdots , k_n)  \in W_Q \setminus Z_Q$, \ \ i.e. $k_1, \cdots , k_n \in \mathbb{Z}$, \  $ \sum\limits_{j=1}^n k_j = -\zeta(Q)$ \ and \ $\Delta_{_Q}(\underline{k}) \ge 2\,.$

Fix two indices $r, t \in \{1, \cdots, n\}$ such that $k_t = \max\{k_1, \cdots , k_n\}$ and $k_r = \min\{k_1, \cdots , k_n\}$. From the definition we have $k_t-k_r = \Delta_{_Q}(\underline{k}) \ge 2$. 

We define $l_j:=k_j$ for every $j \in \{1, \cdots, n\}\setminus\{r, t\}$, \ \,$l_r:= k_r+1$ ,\  \ $l_t:= k_t-1$ and 
$\underline{l} :=(l_1, \cdots , l_n)\in \mathbb{Z}^n$. 
\ Since
\ $\sum\limits_{j=1}^n l_j=\sum\limits_{j=1}^n k_j =- \zeta (Q)\,,$\ we have \ $\underline{l} \in W_Q\,.$

The inequality $\psi_{_Q}(\underline{k}) >  \psi_{_Q}(\underline{l})$ is equivalent to 

$(\arg(\mu_r) +2k_r \pi)^2 + (\arg(\mu_t) +2k_t \pi)^2 > (\arg(\mu_r) +2l_r \pi)^2 + (\arg(\mu_t) +2l_t \pi)^2$ and the latter is equivalent to the inequality 
$2(k_t-k_r)\pi > 2 \pi + \arg(\mu_r) - \arg(\mu_t)\,$, which is satisfied since $2 (k_t-k_r)\pi \ge 4 \pi$ \ and \ $\arg(\mu_r) -\arg(\mu_t) < 2 \pi\,.$ 
 \ Hence we have \ $\psi_{_Q}(\underline{k}) >  \psi_{_Q}(\underline{l})$ \ and this concludes the proof.
\end{proof}

\begin{lemma}\label{min-0-1}
With the same notations as in Remarks-Definitions  \ref{def-m(Q} (a), let 

$\underline{k}=(k_1, \cdots , k_n) \in Z_Q$.

If $\zeta(Q) \ge 0$, then $\underline{k}$ has  $\zeta(Q)$ entries equal to $-1$, while the remaining ones are equal to $0$. In particular, if $\zeta(Q)=0$, then $Z_Q$ consists of the unique element $(0, \cdots , 0)$.
\end{lemma}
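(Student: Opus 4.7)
The plan is to exploit the constraint $\Delta_{_Q}(\underline{k}) \le 1$ to show that the entries of $\underline{k}$ lie in exactly one pair of consecutive integers, and then to pin down which pair using the conditions $\sum k_j = -\zeta(Q)$ and $0 \le \zeta(Q) \le \lfloor n/2 \rfloor$.

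First I would fix $\underline{k}=(k_1,\dots,k_n)\in Z_Q$ and set $a:=\min\{k_1,\dots,k_n\}\in \mathbb{Z}$. By definition of $Z_Q$, every $k_j$ belongs to $\{a, a+1\}$. Writing $p$ for the number of indices $j$ with $k_j=a+1$ (so $0\le p\le n$ and $n-p$ entries equal $a$), the condition $\underline{k}\in W_Q$ becomes
\begin{equation*}
na+p=\sum_{j=1}^n k_j=-\zeta(Q).
\end{equation*}

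Next I would do a short case analysis on $a$, using $0\le \zeta(Q)\le \lfloor n/2\rfloor\le n/2 < n$ (from Remarks-Definitions \ref{prelim-Q-q}\,(c), together with $n\ge 2$). If $a\ge 0$, then $na+p\ge 0$, so the equality forces $\zeta(Q)=0$, $a=0$, $p=0$; in this case $\underline{k}=(0,\dots,0)$, which is consistent with the claim (zero entries equal to $-1$). If $a\le -2$, then $na+p\le -2n+n=-n<-\zeta(Q)$, a contradiction. Thus the only remaining possibility is $a=-1$, which gives $p=n-\zeta(Q)$; this means exactly $\zeta(Q)$ of the entries equal $-1$ and the remaining $n-\zeta(Q)$ entries equal $0$, as desired. (Note that the degenerate solution $a=0$, $p=0$ at $\zeta(Q)=0$ coincides with the $a=-1$, $p=n$ description, so both phrasings yield the same tuple $(0,\dots,0)$.)

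The final ``in particular'' statement then follows immediately: when $\zeta(Q)=0$, the first part of the lemma shows that $\underline{k}$ must have zero entries equal to $-1$, hence $\underline{k}=(0,\dots,0)$, so $Z_Q=\{(0,\dots,0)\}$. No step presents a real obstacle: the only subtlety is confirming that $a\le -2$ is ruled out, which relies on the sharp bound $\zeta(Q)\le \lfloor n/2\rfloor<n$ coming from Remarks-Definitions \ref{prelim-Q-q}\,(c).
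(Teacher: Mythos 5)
Your proof is correct and follows essentially the same route as the paper: both arguments use $\Delta_{_Q}(\underline{k})\le 1$ to conclude that the entries take only two consecutive integer values, then combine the sum constraint $\sum_j k_j=-\zeta(Q)$ with the bounds $0\le\zeta(Q)\le\lfloor n/2\rfloor$ from Remarks-Definitions \ref{prelim-Q-q}\,(c) to force those values to be $0$ and $-1$ (the paper pins down the maximum $H=0$ by two inequalities, while you pin down the minimum $a$ by a case split — an inessential difference). The degenerate overlap you note at $\zeta(Q)=0$ is handled correctly, so there is nothing to fix.
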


\begin{proof}
Set $\zeta : = \zeta(Q)$. Since $\Delta_{_Q}(\underline{k}) \le 1$, there exists an integer $H$ and a non-empty set $J \subseteq \{1, \cdots , n\}$ such that $k_j = H$ for every $j \in J$ and $k_i = H-1$ for every $i \in \{1, \cdots , n\} \setminus J$. Let $\chi \ge 1$ be the cardinality of $J$. Then $-\zeta= \sum\limits_{j=1}^n k_j = \chi H + (n-\chi)(H-1) = n(H-1)+\chi\,.$ Remembering Remarks \ref{prelim-Q-q} (c), we get $n(H-1)= -\zeta -\chi \ge - \lfloor \dfrac{n}{2} \rfloor - n > -2n\,:$ so $H \ge 0$.
Since $\zeta \ge 0$, we have $n(H-1) = -\zeta - \chi \le -1$, hence $H < 1\,$ and so $H=0\,.$ Therefore $\underline{k}$ necessarily has  $\zeta$ entries equal to $-1$, while the remaining ones are equal to $0$.
\end{proof}

\begin{prop}\label{m-valutation}
Let $Q$ be a matrix of $SU_n$ ($n\ge 2$) whose $n$ eigenvalues are $\mu_1, \cdots , \mu_n$ ordered so that we have $\ \arg(\mu_1) \le \arg(\mu_2) \le \cdots \le \arg(\mu_{n})\,$ and assume that\\ $\zeta(Q)=\dfrac{1}{2 \pi} \sum\limits_{j=1}^n \arg(\mu_j)\ge 0\,.$ Let \, 
$m(Q)=\min\{ \Vert X \Vert_{_\phi}^2 \ : \ X \in \mathfrak{su}_n \mbox{ and } \exp(X)=Q \}$ 
and $\Theta(Q)= \{X\in \mathfrak{su}_n : \exp(X)=Q\,, \  \Vert X \Vert_{_\phi}^2= m(Q)\}$.

Finally, let $X_0$ be any $\mathfrak{su}_n$--logarithm of $Q$ \ whose $n$ eigenvalues are : \\ $(\arg(\mu_1) +2 h_1 \pi)$ $\!\!\bf{i}$  $ , \cdots , (\arg(\mu_n)+ 2 h_n\pi) \bf{i}$ \ $(\!\!\!$ with\ \,$h_1, \cdots, h_n \in \mathbb{Z}$\ and \ $\sum\limits_{j=1}^n h_j = -\zeta(Q))\,. $

a) If \ $\zeta(Q)=0 $, \ then \ $m(Q) = \sum\limits_{j=1}^{n} \big(\!\arg(\mu_j)\big)^2\,;$ 

furthermore, \ $X_0 \in \Theta(Q)$ \ if and only if 
 \ $ h_j=0\,$\ for every \ $j\in \{1,\cdots, n\}\,.$

\smallskip

b) If \ $\zeta(Q) \ge 1\,,$  
\ then \ $m(Q) = \sum\limits_{j=1}^{n-\zeta(Q)} \big(\!\arg(\mu_j)\big)^2 + \sum\limits_{j=n-\zeta(Q)+1}^{n} \big(2\pi-\arg(\mu_j)\big)^2$;
\smallskip

furthermore, if \,$\mu_{n-\zeta(Q)} \ne \mu_{n-\zeta(Q) +1}\,,$ we have \ $X_0 \in \Theta(Q)$ \ if and only if 

$ h_j=0\,$\ for every \ $j\in \{1,\cdots, n-\zeta(Q)\}\,$\ and \ $h_l=-1\,$\ for every \ $l\in \{n-\zeta(Q)+1,\cdots, n\}\,;$ 

while, if \ $\mu_{n-\zeta(Q)}=\mu_{n-\zeta(Q) +1}$, \ we have \ $X_0 \in \Theta(Q)$ \ if and only if 

$h_r=0$\ for every $r\in \{1,\cdots, n-\zeta(Q)-1\}$ such that $\mu_r\ne \mu_{n-\zeta(Q)}$, 

$h_t=-1$\ for every  
$t\in \{n-\zeta(Q)+2,\cdots, n\}$ such that $\mu_t\ne \mu_{n-\zeta(Q)}$,

and 
\ $h_m=0$ or $h_m=-1\,$ for every index $m$ such that $\mu_m= \mu_{n-\zeta(Q)}\,$ 

$($with the constraint that the equality $\sum\limits_{j=1}^n h_j = - \zeta(Q)\,$ is satisfied $).$

\end{prop}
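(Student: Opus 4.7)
The plan is to reduce the computation of $m(Q)$ to the combinatorial minimization already prepared by Remarks-Definitions \ref{def-m(Q}(a) and Lemmas \ref{min-su-W0}, \ref{min-0-1}. Recall that any $\mathfrak{su}_n$-logarithm $X_0$ of $Q$ corresponds to a tuple $\underline{h} = (h_1, \dots, h_n) \in W_Q$, with $\Vert X_0 \Vert^2_\phi = \psi_{_Q}(\underline{h})$, and the identity $m(Q) = \min\{\psi_{_Q}(\underline{k}) : \underline{k} \in W_Q\}$ holds. By Lemma \ref{min-su-W0}, the infimum is attained only at elements of $Z_Q$, and by Lemma \ref{min-0-1} (since $\zeta(Q) \ge 0$), every $\underline{k} \in Z_Q$ has exactly $\zeta(Q)$ entries equal to $-1$ with all remaining entries equal to $0$. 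Hence the whole problem reduces to choosing which $\zeta(Q)$ indices receive the value $-1$.

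For case (a), when $\zeta(Q)=0$, Lemma \ref{min-0-1} gives $Z_Q = \{(0,\dots,0)\}$, so $m(Q) = \psi_{_Q}(\underline{0}) = \sum_{j=1}^n (\arg(\mu_j))^2$, and the only $X_0$ with $\Vert X_0 \Vert^2_\phi = m(Q)$ is the one with all $h_j = 0$ (any $\underline{h}$ with $\psi_{_Q}(\underline{h}) = m(Q)$ must lie in $Z_Q$ by Lemma \ref{min-su-W0}).

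For case (b), when $\zeta(Q) \ge 1$, given a tuple $\underline{k} \in Z_Q$ supported (on the $-1$'s) at indices $i_1 < \cdots < i_{\zeta(Q)}$, I expand
\[
\psi_{_Q}(\underline{k}) = \sum_{j=1}^n (\arg(\mu_j))^2 + 4\pi^2 \zeta(Q) - 4\pi \sum_{l=1}^{\zeta(Q)} \arg(\mu_{i_l}).
\]
Minimizing $\psi_{_Q}$ on $Z_Q$ therefore amounts to maximizing $\sum_l \arg(\mu_{i_l})$ over all $\zeta(Q)$-subsets of $\{1,\dots,n\}$. Because we ordered the eigenvalues so that $\arg(\mu_1) \le \cdots \le \arg(\mu_n)$, this sum is maximized by choosing the last $\zeta(Q)$ indices, giving the asserted formula
\[
m(Q) = \sum_{j=1}^{n-\zeta(Q)}(\arg(\mu_j))^2 + \sum_{j=n-\zeta(Q)+1}^{n}(2\pi - \arg(\mu_j))^2.
\]

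The last step is identifying when the maximizer $\{n-\zeta(Q)+1, \dots, n\}$ of $\sum_l \arg(\mu_{i_l})$ is unique, and this is where the case split on $\mu_{n-\zeta(Q)}$ vs $\mu_{n-\zeta(Q)+1}$ enters, and it is the only subtle part. If $\arg(\mu_{n-\zeta(Q)}) < \arg(\mu_{n-\zeta(Q)+1})$, strict monotonicity across the cut forces any maximizer to coincide with $\{n-\zeta(Q)+1, \dots, n\}$, yielding the unique $\underline{h}$ described. If instead $\mu_{n-\zeta(Q)} = \mu_{n-\zeta(Q)+1}$, then all indices sharing this common argument contribute equally, so one may freely distribute the $-1$'s among them (constrained only by the total $\sum h_j = -\zeta(Q)$); the indices $r$ with $\arg(\mu_r)$ strictly smaller must keep $h_r = 0$ and the indices $t$ with $\arg(\mu_t)$ strictly larger must have $h_t = -1$, since shifting a $-1$ off a large-argument index or onto a small-argument index strictly increases $\psi_{_Q}$ by the expansion above. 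This establishes the characterization of $\Theta(Q)$ in both sub-cases of (b).
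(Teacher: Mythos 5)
Your proposal is correct and follows essentially the same route as the paper: both reduce via Remarks-Definitions \ref{def-m(Q}(a) and Lemmas \ref{min-su-W0} and \ref{min-0-1} to placing $\zeta(Q)$ entries equal to $-1$ within $Z_Q$, and then show the optimum (with its equality cases) is achieved exactly by assigning the $-1$'s to the indices of largest argument. The only difference is cosmetic: where the paper compares elements of $Z_Q$ by iterated pairwise swaps of a $-1$ and a $0$, you linearize $\psi_{_Q}$ on $Z_Q$ as $\sum_j(\arg\mu_j)^2+4\pi^2\zeta(Q)-4\pi\sum_{l}\arg(\mu_{i_l})$ and invoke top-$\zeta(Q)$ selection, which yields the same minimizer characterization.
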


\begin{proof}
Set $\zeta:=\zeta(Q)\,.$

If $\zeta = 0\,,$ the formula for $m(Q)$ follows directly from Lemma \ref{casi-zeta-intermedi}\,. Clearly, if \\ $h_1=\cdots=h_n=0\,$ we have $\Vert X_0 \Vert_{_\phi}^2= m(Q)\,,$ and so \ $X_0 \in \Theta(Q)\,.$ Conversely, if \ $X_0 \in \Theta(Q)$ then, by Lemma \ref{min-su-W0}\,, \,$(h_1, \cdots, h_n) \in Z_Q\,;$
hence, by Lemma \ref{min-0-1}\,, we obtain $h_j=0\,$ for every \ $j\in \{1,\cdots, n\}\,$ and the proof of (a) is complete.
\smallskip

Now let $\zeta \ge 1\,,$ and denote $\underline{k}_0 :=(\, \underbrace{0, \cdots , 0}_{n-\zeta}, \, \underbrace{-1, \cdots, -1}_{\zeta} \, )\in Z_Q \subset W_Q\,.$ Remembering Remarks-Definitions \ref{def-m(Q} (a), we want to prove that the minimum of the map $\psi_{_Q}$ on $W_Q$ is reached at $\underline{k}_0\,.$ By Lemma \ref{min-su-W0}, this minimum is reached only on $Z_Q$, while, by Lemma \ref{min-0-1}, every   $\underline{k} \in Z_Q$ has  $\zeta$ entries equal to $-1$ and the remaining ones equal to $0$. Let $\underline{k}=(k_1, \cdots, k_n) \in Z_Q$ and assume that there are two indices $1\le i < r \le n$ such that $k_i=-1$ and $k_{r}=0$; then 
we define a new element $\underline{l} = (l_1, \cdots , l_n) \in Z_Q$ \,with\, $l_j= k_j$ for any $j\ne i, r,\ \ l_i = 0,\ \ l_{r} = - 1$, and we obtain $\psi_{_Q} (\underline{k}) \ge \psi_{_Q}(\underline{l})\,.$ Indeed this inequality is equivalent to
$\big(\!\arg (\mu_i) -2\pi\big)^2 + \big(\!\arg(\mu_r)\big)^2 \ge \big(\!\arg(\mu_i)\big)^2 + \big(\!\arg (\mu_{r}) -2\pi\big)^2$, and the latter inequality is equivalent to 
\ $\arg (\mu_i) \le \arg (\mu_{r})\,,$ which is satisfied since $i<r\,.$ Also note that we have $\psi_{_Q} (\underline{k}) = \psi_{_Q}(\underline{l})\,$ if and only if $\arg (\mu_i) =\arg (\mu_{r})\,,$ i.e. if and only if $\mu_i=\mu_r\,.$
Repeating, if necessary, on $\underline{l}$ the same operation performed on $\underline{k}$, after a finite number of steps we obtain $\psi_{_Q}(\underline{k}) \ge \psi_{_Q}(\underline{k}_0)$, for any $\underline{k} \in Z_Q\,.$ \ It follows that we have $m(Q)=\psi_{_Q}(\underline{k}_0)= \sum\limits_{j=1}^{n-\zeta} \big(\!\arg(\mu_j)\big)^2 + \sum\limits_{j=n-\zeta+1}^{n} \big(2\pi-\arg(\mu_j)\big)^2\,.$
We also obtain that, 
if 
$\mu_{n-\zeta}\ne \mu_{n-\zeta+1}\,,$ the equality $\psi_{_Q}(\underline{k}) = \psi_{_Q}(\underline{k}_0)$ holds if and only if $\underline{k}=\underline{k}_0\,;$ while, if $\mu_{n-\zeta}=\mu_{n-\zeta+1}\,,$ we have $\psi_{_Q}(\underline{k}) = \psi_{_Q}(\underline{k}_0)$ if and only if $k_r=0$\ for every $r\in \{1,\cdots, n-\zeta-1\}$ such that $\mu_r\ne \mu_{n-\zeta}\,$, 
\ $k_t=-1$\ for every  
$t\in \{n-\zeta+2,\cdots, n\}$ such that $\mu_t\ne \mu_{n-\zeta}\,$,
and 
\ $k_m=0$ or $k_m=-1\,$ for every $m$ such that $\mu_m= \mu_{n-\zeta}\,.$ 
Thus the second part of statement (b) has also been proved.
\end{proof}

\begin{rem}
Note that, if $0 \le \zeta(Q) \le s(Q)$, the formulas in part (a) and in part (b) of Proposition \ref{m-valutation}, reduce to formula \ (iii) of Lemma  
 \ref{casi-zeta-intermedi}.
\end{rem}

\section{The set $\Theta(Q)$ of minimizing logarithms of any special unitary matrix $Q$}

\begin{rem}\label{conseguenze}
From Proposition \ref{m-valutation} and with the same notations and hypotheses, we get: 

\ \ i) if $\zeta(Q)=0\,,$ \ then 
$\Theta(Q)$ agrees with the set of all $\mathfrak{su}_n\mbox{--logarithms of } Q$ whose $n$ eigenvalues are : $\arg(\mu_1)$ $\!\!\bf {i}\,,$ $\arg(\mu_2)$ $\!\!\bf {i}\,,$ $\cdots, \arg(\mu_{n-1})$ $\!\!\bf {i}\,,$ $ \arg(\mu_n)$ $\!\!\bf {i}\,;$

\ \ ii) if $\zeta(Q) \ge 1\,,$ \  then $\Theta(Q)$ agrees with the set of all $\mathfrak{su}_n\mbox{--logarithms of } Q$ whose $n$ eigenvalues are the following: 

$\arg(\mu_1)$ $\!\!\bf {i}\,,$ $\arg(\mu_2)$ $\!\!\bf {i}\,,$ $\cdots, \arg(\mu_{n-\zeta(Q)-1})$ $\!\!\bf {i}\,,$ $ \arg(\mu_{n-\zeta(Q)})$ $\!\!\bf {i}\,,$ $ \big(\!\arg(\mu_{n-\zeta(Q)+1})-2\pi\big)$ $\!\!\bf {i}\,,$ \\ $\big(\!\arg(\mu_{n-\zeta(Q)+2})-2\pi\big)$ $\!\!\bf {i}\,,$ $\cdots, \big(\!\arg(\mu_{n-1})-2\pi\big)$ $\!\!\bf {i}\,,$ $ \big(\!\arg(\mu_{n})-2\pi\big)$ $\!\!\bf {i}\,$. 
\end{rem}

\begin{prop}\label{Theta(Q)}
Let $Q$ be a matrix of $SU_n$ ($n\ge 2$) whose $n$ eigenvalues are $\mu_1, \cdots , \mu_n$ ordered so that we have $\ \arg(\mu_1) \le \arg(\mu_2) \le \cdots \le \arg(\mu_{n})\,$ and assume that\\ $\zeta(Q)=\dfrac{1}{2 \pi} \sum\limits_{j=1}^n \arg(\mu_j)\ge 0\,.$ \,Let \, 
$m(Q)=\min\{ \Vert X \Vert_{_\phi}^2 \ : \ X \in \mathfrak{su}_n \mbox{ and } \exp(X)=Q \}$ 
and  \\ $\Theta(Q)= \big\{X\in \mathfrak{su}_n : \exp(X)=Q\,, \  \Vert X \Vert_{_\phi}^2= m(Q)\big\}\,.$ \\ If $\zeta(Q) \ge 1$, we also set \ $\nu_{_1}:=max\big\{j\in \{1,\cdots, n-\zeta(Q)\}: \mu_{n-\zeta(Q)+1-j}=\mu_{n-\zeta(Q)}\big\}$ \ and
\ $\nu_{_2}:=max\big\{j\in \{1,\cdots, \zeta(Q)\}: \mu_{n-\zeta(Q)+j}=\mu_{n-\zeta(Q)+1}\big\}\,.$ 

a) If $\zeta(Q)=0$ \,, then $\Theta(Q)$ is a single point.

b) If $\zeta(Q)\ge 1$ and $\mu_{n-\zeta(Q)} \ne \mu_{n-\zeta(Q)+1}$\,, then $\Theta(Q)$ is a single point.

c) If $\zeta(Q)\ge 1$ and $\mu_{n-\zeta(Q)}=\mu_{n-\zeta(Q)+1}$\,, then $\Theta(Q)$ is a compact submanifold of $\mathfrak{su}_n$ diffeomorphic to the symmetric space $\dfrac{U_{(\nu_{_1}+\nu_{_2})}}{U_{\nu_{_1}}\oplus U_{\nu_{_2}}}$ and therefore also to the complex Grassmannian ${\bf Gr}(\nu_{_2}; \mathbb{C}^{(\nu_{_1}+\nu_{_2})})\,.$
\end{prop}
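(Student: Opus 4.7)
The plan is to exploit that any $X \in \Theta(Q)$ commutes with $Q=\exp(X)$ and is therefore simultaneously diagonalizable with $Q$. Writing $U^* Q U = \bigoplus_{i=1}^{t} \lambda_i I_{n_i}$ for some $U \in U_n$, where $\lambda_1,\dots,\lambda_t$ are the distinct eigenvalues of $Q$ with multiplicities $n_1,\dots,n_t$, an eigenspace-preservation argument analogous to Lemma \ref{commutazione} (but now applied to skew-hermitian matrices) gives $Y:=U^*XU = \bigoplus_{i=1}^t Y_i$ with $Y_i \in \mathfrak{u}_{n_i}$ and $\exp(Y_i)=\lambda_i I_{n_i}$ for each $i$. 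Since $Y_i$ is unitarily diagonalizable and $\exp(Y_i)$ is a scalar multiple of the identity, the eigenvalues of $Y_i$ lie in $\{(\arg(\lambda_i)+2k\pi)\mathbf{i} : k\in\mathbb{Z}\}$.

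Matching this constraint with the spectrum of $Y$ dictated by Remark \ref{conseguenze}, I observe the following. In case (a) the only admissible value in each block is $\arg(\lambda_i)\mathbf{i}$, so $Y_i = \arg(\lambda_i)\mathbf{i}\, I_{n_i}$ is scalar. In case (b), the hypothesis $\mu_{n-\zeta(Q)}\neq\mu_{n-\zeta(Q)+1}$ ensures that, for each distinct eigenvalue $\lambda_i$ of $Q$, the indices $j$ with $\mu_j=\lambda_i$ all sit in the \emph{unshifted} half $\{j\le n-\zeta(Q)\}$ or all in the \emph{shifted} half $\{j> n-\zeta(Q)\}$; consequently $Y_i$ is scalar here too. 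In both situations $Y$, and hence $X=UYU^*$, is uniquely determined, which settles (a) and (b).

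Case (c) is the genuinely non-trivial one. All blocks $Y_i$ with $\lambda_i \ne \lambda_* := \mu_{n-\zeta(Q)}$ are still forced to be scalar, but the distinguished block $Y_{i_*}$, of size $\nu_{_1}+\nu_{_2}$, is an arbitrary skew-hermitian matrix having $\arg(\lambda_*)\mathbf{i}$ as an eigenvalue of multiplicity $\nu_{_1}$ and $(\arg(\lambda_*)-2\pi)\mathbf{i}$ of multiplicity $\nu_{_2}$. Fixing the diagonal representative
\[
D_0 := \arg(\lambda_*)\mathbf{i}\, I_{\nu_{_1}} \,\oplus\, \bigl(\arg(\lambda_*)-2\pi\bigr)\mathbf{i}\, I_{\nu_{_2}},
\]
the admissible $Y_{i_*}$ form precisely the $Ad$-orbit of $D_0$ under $U_{\nu_{_1}+\nu_{_2}}$; by Lemma \ref{commutazione} the stabilizer of $D_0$ is $U_{\nu_{_1}}\oplus U_{\nu_{_2}}$, so this orbit is a compact embedded submanifold of $\mathfrak{u}_{\nu_{_1}+\nu_{_2}}$ diffeomorphic to $U_{\nu_{_1}+\nu_{_2}}/(U_{\nu_{_1}}\oplus U_{\nu_{_2}})$. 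Transferring back via $Y\mapsto UYU^*$ carries this structure to $\Theta(Q)$, and the classical identification $U_{\nu_{_1}+\nu_{_2}}/(U_{\nu_{_1}}\oplus U_{\nu_{_2}})\cong \mathbf{Gr}(\nu_{_2};\mathbb{C}^{\nu_{_1}+\nu_{_2}})$ completes the claim.

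The main hurdle I expect is verifying that the construction is independent of the diagonalizer $U$: any two choices $U_1,U_2$ differ by $U_1^*U_2\in \bigoplus_i U_{n_i}$ by Lemma \ref{commutazione}, and since the $Y_i$ with $i\ne i_*$ are already scalar (hence central in their block), all ambiguity is absorbed by the $U_{\nu_{_1}+\nu_{_2}}$-conjugation on $Y_{i_*}$. This confirms that $\Theta(Q)$ is in smooth bijection with a \emph{single} $Ad$-orbit rather than a disjoint union, and yields the announced diffeomorphism; compactness and the submanifold property are automatic, being a consequence of the orbit map of a compact Lie group action.
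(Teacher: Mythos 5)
Your proposal is correct and follows essentially the same route as the paper's proof: both reduce to a block-diagonal picture via a unitary diagonalizing $Q$, use Remark \ref{conseguenze} to pin down the admissible spectrum, invoke Lemma \ref{commutazione}-type commutant arguments to force all blocks scalar except (in case (c)) the one of size $\nu_{_1}+\nu_{_2}$, and identify $\Theta(Q)$ with the adjoint orbit of a model point with stabilizer $U_{\nu_{_1}}\oplus U_{\nu_{_2}}$, hence with $U_{\nu_{_1}+\nu_{_2}}/(U_{\nu_{_1}}\oplus U_{\nu_{_2}})\cong {\bf Gr}(\nu_{_2};\mathbb{C}^{\nu_{_1}+\nu_{_2}})$. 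The only difference is presentational (you conjugate $X$ and localize to the distinguished block, while the paper conjugates $Q$ and takes the orbit under the full commutant group $G$), so the two arguments are interchangeable.
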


\begin{proof}
Set $\beta:=\mu_{n-\zeta(Q)}\,.$  We denote by $S_1$ the set consisting of all distinct eigenvalues $\lambda$ of $Q$ such that $\arg(\lambda)<\arg(\beta)$ and by $S_2$ the set consisting of all distinct eigenvalues $\varphi$ of $Q$ such that $\arg(\varphi)>\arg(\beta)\,$ ($S_1$ and $S_2$ can also be empty); for every eigenvalue $\epsilon$ of $Q\,,$ we also denote by $m(\epsilon)\ge1$ the multiplicity of $\epsilon$ as an eigenvalue of $Q\,.$ If we set $J:=\big(\bigoplus\limits_{\lambda \in S_1} \lambda I_{_{m(\lambda)}}\big)\oplus \big(\beta I_{_{m(\beta)}}\big)\oplus \big(\bigoplus\limits_{\varphi \in S_2} \varphi I_{_{m(\varphi)}}\big)\,,$ it is a well-known fact that there exists $U \in U_n$ such that $Q=Ad_{_U}(J)\,;$ so, by Remark \ref{Adjoint-map-equalities} , we can assume that \\ $Q=J=\big(\bigoplus\limits_{\lambda \in S_1} \lambda I_{_{m(\lambda)}}\big)\oplus \big(\beta I_{_{m(\beta)}}\big)\oplus \big(\bigoplus\limits_{\varphi \in S_2} \varphi I_{_{m(\varphi)}}\big)\,.$

Now, if $\zeta(Q)=0$ as in (a) or if $\zeta(Q)\ge 1$ and $\beta=\mu_{n-\zeta(Q)} \ne \mu_{n-\zeta(Q)+1}\,$ as in (b),  we set

$\widehat{J}:=\big(\bigoplus\limits_{\lambda \in S_1} \arg(\lambda)$ $\!\!\bf {i}$ $I_{_{m(\lambda)}}\big)\oplus \big(\arg(\beta)$ $\!\!\bf {i}$ $I_{_{m(\beta)}}\big)\oplus \big(\bigoplus\limits_{\varphi \in S_2} (\arg(\varphi)-2\pi)$ $\!\!\bf {i}$ $I_{_{m(\varphi)}}\big)\,.$ 

(Here we agree that, if $S_1$ or $S_2$ are empty, the terms corresponding to them do not appear in all previous direct sums.)

Note that $\exp(\widehat{J})=J=Q\,.$
By Remark \ref{conseguenze}, a matrix $X\in \mathfrak{su}_n$ belongs to $\Theta(Q)$ if and only if there is $R\in U_n$ such that $X=Ad_{_R}(\widehat{J})$ \ and \ $Q=\exp(X)=Ad_{_R}(\exp(\widehat{J}))=Ad_{_R}(Q)$, i. e. if and only if $X=Ad_{_R}(\widehat{J})$ \ for some $R\in U_n$ such that $RQ=QR\,.$ By Lemma \ref{commutazione}, a matrix $R\in U_n$ commutes with $Q$ if and only if $R\in \big(\bigoplus\limits_{\lambda \in S_1} U_{m(\lambda)}\big)\oplus U_{m(\beta)}\oplus \big(\bigoplus\limits_{\varphi \in S_2} U_{m(\varphi)}\big)\,;$ this implies that $R$ also commutes with $\widehat{J}\,,$ and so $X=Ad_{_R}(\widehat{J})=\widehat{J}\,.$ \ Hence  $\widehat{J}$ is the unique element of $\Theta(Q)\,.$ This proves (a) and (b).

Now, assume as in (c): \,$\zeta(Q)\ge 1$ \,and \,$\beta=\mu_{n-\zeta(Q)}= \mu_{n-\zeta(Q)+1}\,,$ so that $m(\beta)=\nu_{_1}+\nu_{_2}\,.$ 

If we set
$\widetilde{J}:=$ 

$\big(\bigoplus\limits_{\lambda \in S_1} \arg(\lambda)$ $\!\bf {i}$ $I_{_{m(\lambda)}}\big)\oplus \big(\arg(\beta)$ $\!\bf {i}$ $I_{_{\nu_{_1}}}\big)\oplus \big((\arg(\beta)-2\pi)$ $\!\bf {i}$ $I_{_{\nu_{_2}}}\big)\oplus \big(\bigoplus\limits_{\varphi \in S_2} (\arg(\varphi)-2\pi)$ $\!\bf {i}$ $I_{_{m(\varphi)}}\big)\,,$ 

we have $\exp(\widetilde{J})=J=Q\,;$  as in the proof of parts (a) and (b) and using Lemma \ref{commutazione} and Remark \ref{conseguenze} again, we can prove  that a matrix $X\in \mathfrak{su}_n$ belongs to $\Theta(Q)$ if and only if  $X=Ad_{_R}(\widetilde{J})$ for some $R\in U_n$ commuting with $Q\,,$ i.e. if and only if $X=Ad_{_R}(\widetilde{J})$ for some 
$R\in G:=\big(\bigoplus\limits_{\lambda \in S_1} U_{m(\lambda)}\big)\oplus U_{(\nu_{_1}+\nu_{_2})}\oplus \big(\bigoplus\limits_{\varphi \in S_2} U_{m(\varphi)}\big)\,.$
Since the map $(R,X) \mapsto Ad_{_R}(X)$ defines a left action of the compact Lie group $G$ on $\mathfrak{su}_n\,,$  we conclude that  $\Theta(Q)$ agrees with the orbit of $\widetilde{J}$ with respect to this action, while the corresponding isotropy subgroup at $\widetilde{J}$ \,is
\,$\widehat{G}:=\big(\bigoplus\limits_{\lambda \in S_1} U_{m(\lambda)}\big)\oplus U_{\nu_{_1}}\oplus U_{\nu_{_2}}\oplus \big(\bigoplus\limits_{\varphi \in S_2} U_{m(\varphi)}\big)\,.$ Hence we conclude that $\Theta(Q)$ is a compact submanifold of $\mathfrak{su}_n$ diffeomorphic to $\dfrac{G}{\widehat{G}}\cong\dfrac{U_{(\nu_{_1}+\nu_{_2})}}{U_{\nu_{_1}}\oplus U_{\nu_{_2}}}\,$
(see, for instance, \cite{EoM-Orbit}). Since it is known that $\dfrac{U_{(\nu_{_1}+\nu_{_2})}}{U_{\nu_{_1}}\oplus U_{\nu_{_2}}}\,$ is a symmetric space diffeomorphic to the complex Grassmannian ${\bf Gr}(\nu_{_2}; \mathbb{C}^{(\nu_{_1}+\nu_{_2})})\,,$ the proof of the Proposition is complete.
\end{proof}

\begin{prop}\label{sunlog}
Let $Q$ be a matrix of $SU_n$ ($n\ge 2$) whose $n$ eigenvalues are $\mu_1, \cdots , \mu_n\,.$ Denote by \,$s(Q)$  the multiplicity of \ $-1$  as an eigenvalue of \,$Q$ \ and let \\ 
$\zeta(Q)=\dfrac{1}{2 \pi} \sum\limits_{j=1}^n \arg(\mu_j)\,.$\ \ Then

a) if $\zeta(Q)>s(Q)$ or $\zeta(Q)<0\,,$ \ the set $\mathfrak{su}_n$\!--\,$plog (Q)$ is empty;

b) if $0\le \zeta(Q) \le s(Q)\,,$ \ the set \ $\mathfrak{su}_n$\!--\,$plog (Q)$ is a compact submanifold of $\mathfrak{su}_n$ diffeomorphic to the complex Grassmannian ${\bf Gr}(\zeta(Q); \mathbb{C}^{s(Q)})\,;$ \ in particular, $\mathfrak{su}_n$\!--\,$plog (Q)$ is a single point if and only if \, $\zeta(Q)=0$ \ or \ $\zeta(Q)=s(Q)\,.$
\end{prop}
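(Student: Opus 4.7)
The plan is to combine Lemma \ref{casi-zeta-intermedi}, which identifies precisely when $\mathfrak{su}_n$--$plog(Q)$ is non-empty and in that case equates it with $\Theta(Q)$, with Proposition \ref{Theta(Q)}, which describes the manifold structure of $\Theta(Q)$. Part (a) is then immediate: if $\zeta(Q)\notin\{0,1,\ldots,s(Q)\}$, condition (ii) of Lemma \ref{casi-zeta-intermedi} fails, so by the equivalence (i)$\,\Leftrightarrow\,$(ii) the set $\mathfrak{su}_n$--$plog(Q)$ is empty.

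For part (b), I would assume $0\le\zeta(Q)\le s(Q)$ and reorder the eigenvalues so that $\arg(\mu_1)\le\cdots\le\arg(\mu_n)$; with this ordering, $\mu_j=-1$ if and only if $j\ge n-s(Q)+1$. Lemma \ref{casi-zeta-intermedi} yields $\mathfrak{su}_n$--$plog(Q)=\Theta(Q)$, so I can apply Proposition \ref{Theta(Q)} and split into its three sub-cases. When $\zeta(Q)=0$, part (a) of that proposition produces a single point, matching $\mathbf{Gr}(0;\mathbb{C}^{s(Q)})$. When $\zeta(Q)=s(Q)\ge 1$, the position $n-\zeta(Q)=n-s(Q)$ is just before the block of $-1$'s, so $\arg(\mu_{n-\zeta(Q)})<\pi=\arg(\mu_{n-\zeta(Q)+1})$, hence $\mu_{n-\zeta(Q)}\ne\mu_{n-\zeta(Q)+1}$ and Proposition \ref{Theta(Q)}(b) again yields a single point, in agreement with $\mathbf{Gr}(s(Q);\mathbb{C}^{s(Q)})$.

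The genuinely Grassmannian case is $1\le\zeta(Q)\le s(Q)-1$: here both $n-\zeta(Q)$ and $n-\zeta(Q)+1$ lie in the index range of the $-1$'s, so $\mu_{n-\zeta(Q)}=\mu_{n-\zeta(Q)+1}=-1$ and Proposition \ref{Theta(Q)}(c) applies, giving $\Theta(Q)\cong U_{\nu_{_1}+\nu_{_2}}/(U_{\nu_{_1}}\oplus U_{\nu_{_2}})\cong\mathbf{Gr}(\nu_{_2};\mathbb{C}^{\nu_{_1}+\nu_{_2}})$. The only piece of bookkeeping to carry out is the identification $\nu_{_1}=s(Q)-\zeta(Q)$ and $\nu_{_2}=\zeta(Q)$, obtained by counting how many consecutive indices around the position $n-\zeta(Q)$ still correspond to the eigenvalue $-1$: to the left there are exactly $s(Q)-\zeta(Q)$ of them and to the right all of the remaining $\zeta(Q)$ indices qualify. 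Substituting these values yields the claimed diffeomorphism with $\mathbf{Gr}(\zeta(Q);\mathbb{C}^{s(Q)})$, and the final characterisation of when $\mathfrak{su}_n$--$plog(Q)$ reduces to a point follows from the three sub-cases together with the fact that $\mathbf{Gr}(k;\mathbb{C}^{s})$ is a single point exactly when $k\in\{0,s\}$. I do not anticipate any serious obstacle: the proof is essentially a repackaging of Lemma \ref{casi-zeta-intermedi} and Proposition \ref{Theta(Q)}, and the only delicate point is the matching of the parameters $(\nu_{_1},\nu_{_2})$ of Proposition \ref{Theta(Q)}(c) with the data $(\zeta(Q),s(Q))$ of the present statement.
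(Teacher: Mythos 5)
Your proposal is correct and follows essentially the same route as the paper, whose proof consists precisely of citing Lemma \ref{casi-zeta-intermedi} and Proposition \ref{Theta(Q)}. The case split and the identification $\nu_{_1}=s(Q)-\zeta(Q)$, $\nu_{_2}=\zeta(Q)$ that you spell out are exactly the bookkeeping the paper leaves implicit, and your checks (including the single-point cases $\zeta(Q)=0$ and $\zeta(Q)=s(Q)$) are accurate.
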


\begin{proof}
The proof follows directly from Lemma \ref{casi-zeta-intermedi}\, and Proposition \ref{Theta(Q)}\,.
\end{proof}

\section{About diameter and diametral pairs of $(SU_n, \phi)$}

\begin{prop}\label{diameter}
Let $m: SU_n \to \mathbb{R}$  be the map defined in Remarks-Definitions \ref{def-m(Q} (a) and let $\delta_n=\max \{m(Q): Q \in SU_n\}$ \ $(n \ge 2)\,.$ \ Then we have\\
$\delta_n=$
$\left\lbrace
\begin{array}{cc}
\ \ \ \ n\pi^2 \ \ \ \ \ \ \ \ \ \ \ \ \ \ \ \ \mbox{if\ $n$\ is\ even\,}\\
\!\!(n-\dfrac{1}{n}) \pi^2 \ \ \ \ \ \ \ \ \ \ \ \mbox{if\ $n$\ is\ odd}
\end{array}\ \ ;
\right. $  
\ \ \ 

furthermore a matrix $Q \in SU_n$ satisfies
$
m(Q)=\delta_n$ \ if and only if \ \ 

$\left\lbrace
\begin{array}{cc}
Q=-I_{_n} \ \ \ \ \ \ \ \ \ \ \ \ \ \ \ \ \ \ \ \ \ \ \ \ \ \ \ \ \ \ \ \ \ \ \ \ \ \ \ \ \ \ \ \mbox{if\ $n$\ is\ even\,}\\
\!\!\!Q=e^{\frac{(n-1)\pi\bf {i}}{n}} I_{_n} \mbox{\ \ or\ \ } Q=e^{\frac{-(n-1)\pi\bf {i}}{n}} I_{_n} \ \ \ \ \ \ \ \ \ \mbox{if\ $n$\ is\ odd}
\end{array}.
\right. $  
\end{prop}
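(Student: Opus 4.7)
The plan is to cast the computation of $\delta_n$ as a constrained convex optimization via Proposition \ref{m-valutation} and then to analyse the extreme points of the resulting polytope.

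By Remarks-Definitions \ref{def-m(Q}\,(c),\,(d), it suffices to maximize the function $f_\zeta(\alpha_1,\ldots,\alpha_n)$ of Proposition \ref{m-valutation} over integers $\zeta\in[0,\lfloor n/2\rfloor]$ and $\zeta$-admissible tuples $(\alpha_1,\ldots,\alpha_n)$. For each such $\zeta$, $f_\zeta$ is a sum of squares of affine functions of the $\alpha_j$, hence strictly convex; the closure of the $\zeta$-admissible set is a compact convex polytope of dimension $n-1$; so the maximum is attained at an extreme point. A standard count of active constraints (among the $n+1$ inequalities $-\pi\le\alpha_1$, $\alpha_j\le\alpha_{j+1}$, $\alpha_n\le\pi$) shows that extreme points have at most three distinct $\alpha_j$-values: the \emph{constant} tuple $\alpha_j=2\pi\zeta/n$ for all $j$; \emph{two-block} tuples; and \emph{three-block} tuples (forced to contain the values $-\pi$ and $\pi$). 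Extreme points with some $\alpha_j=-\pi$ fail to be strictly admissible, but the corresponding matrix has its canonical admissible tuple at a larger $\zeta(Q)$ (the $-1$-eigenvalues being placed at the end as $\pi$'s), and a direct substitution shows that $f$ takes the same value on the two representations. Hence it is enough to examine the constant tuple and the two-block tuples with $v_2=\pi$ and $v_1=\pi(2\zeta+a-n)/a\in(-\pi,\pi)$.

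At the constant tuple, direct computation gives $f_\zeta=4\pi^2\zeta(n-\zeta)/n$, maximized over integer $\zeta\in[0,\lfloor n/2\rfloor]$ at $\zeta=n/2$ for $n$ even (value $n\pi^2$) and at $\zeta=(n-1)/2$ for $n$ odd (value $(n-1/n)\pi^2$). At a two-block tuple, a short algebraic manipulation distinguishing the two sub-cases (first block contained in or straddling $\{j\le n-\zeta\}$) factors $f_\zeta$ as
\[
f_\zeta\;=\;n\pi^2-(n-2\zeta)\,g(a,\zeta)\,\pi^2,
\]
with $g(a,\zeta)\ge 0$ and $g>0$ whenever $\zeta<n/2$. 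This immediately settles the even case. For odd $n$, an elementary monotonicity/integrality argument yields $(n-2\zeta)\,g\ge 1$ in the first sub-case and $(n-2\zeta)\,g\ge 3/(n-1)$ in the second, both strictly larger than $1/n$, so $f_\zeta<(n-1/n)\pi^2$ at every two-block tuple.

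Tracing the equality conditions then gives the uniqueness of the diametral matrices: for even $n$ the bound $n\pi^2$ is realized only at $\zeta=n/2$ with $\alpha_j=\pi$ for every $j$ (the unique $\zeta=n/2$ admissible tuple), forcing $Q=-I_n$; for odd $n$ the bound $(n-1/n)\pi^2$ is realized only at $\zeta=(n-1)/2$ with $\alpha_j=(n-1)\pi/n$, giving $Q=e^{(n-1)\pi\mathbf{i}/n}I_n$, and the symmetry $m(Q)=m(Q^*)$ of Remarks-Definitions \ref{def-m(Q}\,(c) produces the conjugate $Q=e^{-(n-1)\pi\mathbf{i}/n}I_n$. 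The main technical obstacle is the verification of the lower bounds on $(n-2\zeta)\,g(a,\zeta)$ in the two-block case, which requires an elementary but careful case analysis in $(a,\zeta)$ and in the parity of $n-2\zeta$.
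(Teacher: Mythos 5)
Your proposal is correct, but it follows a genuinely different route from the paper. The paper's proof takes a maximizing admissible $n$-tuple and improves it by hand: three perturbation steps force it into the one-parameter family $(2\pi\zeta-(n-1)x,\,x,\dots ,x)$, after which the problem becomes the elementary study of the quadratics $F_\vartheta(x)=n(n-1)x^2-4\pi\vartheta n x+4\pi^2\vartheta(\vartheta+1)$ on explicit intervals, which yields both the value of $\delta_n$ and the maximizers. You instead fix $\zeta$, observe that by Proposition \ref{m-valutation} the objective is a strictly convex quadratic on the closed polytope $\{-\pi\le\alpha_1\le\cdots\le\alpha_n\le\pi,\ \sum_j\alpha_j=2\pi\zeta\}$, so its maximum is attained at vertices; you classify the vertices (constant tuple; two-block tuples, which must contain $-\pi$ or $\pi$; three-block tuples, which must contain both), re-identify any vertex containing the non-admissible value $-\pi$ with the canonical tuple of the same matrix at larger $\zeta$ (the substitution does preserve the value of $m$), and compare the explicit values $4\pi^2\zeta(n-\zeta)/n$ and $n\pi^2-(n-2\zeta)\,g(a,\zeta)\,\pi^2$. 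I checked the key computations: in your two sub-cases $g=(2a+2\zeta-n)/a$ and $g=(3n-2a-2\zeta)/a$, both positive precisely because $v_1>-\pi$, $a\le n-1$ and $\zeta\le\lfloor n/2\rfloor$, and for odd $n$ the integrality estimates $n(n-2\zeta)(2a+2\zeta-n)>a$ and $n(n-2\zeta)(3n-2a-2\zeta)\ge 3n>a$ give the strict gap $1/n$, so your claimed lower bounds hold (no parity analysis is actually needed). As to what each approach buys: yours gets the uniqueness of the diametral matrices almost for free from strict convexity, since \emph{every} maximizer (not merely some) must be an extreme point --- do state this explicitly, because the ``only if'' direction of the statement needs it --- and it replaces the paper's chain of ad hoc perturbations by standard polytope bookkeeping; the paper's argument never leaves strictly admissible tuples, so it needs no boundary re-identification, and it reduces to a one-variable calculus problem. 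One small point to add when you write this up in full: for $n$ even the boundary vertices consisting only of entries $-\pi$ and $\pi$ also attain the value $n\pi^2$ for every $\zeta$, but they all represent the matrix $Q=-I_n$, so the classification of matrices with $m(Q)=\delta_n$ is unaffected.
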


\begin{proof}
As noted in Remarks-Definitions \ref{def-m(Q} (d), \ $\delta_n$  agrees with the absolute maximum of the map \ $(\alpha_1, \cdots, \alpha_n) \mapsto m(\alpha_1, \cdots, \alpha_n)$, where $(\alpha_1, \cdots, \alpha_n)$ varies over the set of all $\zeta\mbox{-admissible\ } n$-tuples of real numbers, with $\zeta \in \big[0 , \lfloor \dfrac{n}{2} \rfloor\,\big]\cap \mathbb{Z}\ .$ 
Now, fix any $\zeta\mbox{-admissible}$ $n$-tuple  $(\beta_1, \cdots, \beta_n)$ such that $m(\beta_1, \cdots, \beta_n)=\delta_n\,$ (where $\dfrac{1}{2 \pi} \sum\limits_{j=1}^n \beta_j =\zeta \in \big[0 , \lfloor \dfrac{n}{2} \rfloor\,\big]\cap\, \mathbb{Z})\,.$ 

\smallskip

First of all, we will prove that we necessarily have $(\beta_1, \cdots, \beta_n)=(\,\underbrace{\pi, \cdots, \pi}_{n}\,)$  \,when $n$ is even\ \,and \ $(\beta_1, \cdots, \beta_n)=(\,\underbrace{\dfrac{(n-1)\pi}{n}, \cdots, \dfrac{(n-1)\pi}{n}}_{n}\,)$  \,when $n$ is odd, and then we will prove all the statements of the Proposition. 
\ The proof will be done in several steps.

i) \ \ If $\zeta \ge 1\,,$ at least one of the following two conditions is necessarily true:

(ia) \ \ $\beta_1=\beta_2=\cdots=\beta_{n-\zeta}=\beta_{n-\zeta+1}\ ;$

(ib) \ \ $\beta_{n-\zeta}=\beta_{n-\zeta+1}=\cdots=\beta_{n-1}=\beta_n\ ;$ 

while, if $\zeta=0\,,$ condition (ib) is obviously always satisfied.

In the case $\zeta \ge 1\,,$ if both conditions (ia) and (ib) are false, then the sets 

$J_1:= \big\{i\in \{1, \cdots,  n-\zeta \}: \beta_i < \beta_{n-\zeta+1}  \big\}\ , \  
J_2:= \big\{j\in \{n-\zeta +1, \cdots, n \}: \beta_j > \beta_{n-\zeta} \big\}$ are both non-empty, and therefore we can define \ $h:=\max J_1 \ , \ \ k:=\min J_2\,.$  \,Note that, from definitions of $h$ and $k$, we have $\beta_h < \beta_{h+1}$ and \,$\beta_{k-1} < \beta_k$\,; so we can choose $\epsilon >0$ such that $\beta_h +\epsilon < \beta_{h+1}$ and $\beta_{k-1} < \beta_k - \epsilon$\,. Now we consider the new $n$-tuple $(\gamma_1, \cdots, \gamma_n)$, where $\gamma_r := \beta_r$ when $r\ne h, k\,,\ \gamma_h:=\beta_h + \epsilon , \  \gamma_k:= \beta_k - \epsilon$\,; we have $\sum\limits_{j=1}^n \gamma_j=\sum\limits_{j=1}^n \beta_j$\,, so the $n$-tuple $(\gamma_1, \cdots, \gamma_n)$ is also $\zeta-\mbox{admissible}$. Since $\beta_k-\beta_h<2\pi$, by Proposition \ref {m-valutation} (b), we get \,$m(\gamma_1, \cdots, \gamma_n)-\delta_n=\gamma_h^{\,2} + (2\pi - \gamma_k)^2-\big[\beta_h^{\,2} + (2\pi- \beta_k)^2\big]=2\epsilon\big(\epsilon+2\pi-(\beta_k-\beta_h)\big)>0\,,$ and this is contrary to the definition of $\delta_n$\,. So we conclude that (ia) or (ib) is true.

ii) \ \ If condition (ia) holds (with $\zeta \ge 1$), \ then \ $\beta_1=\beta_2=\cdots=\beta_{n-1}=\beta_n\,.$

In fact, otherwise, there is an index $r \in \big\{n-\zeta+1,\, n-\zeta+2, \cdots, n-1, n\big\}$\,, such that \\$\beta_{r+1} > \beta_{r}=\beta_{r-1}=\cdots=\beta_{n-\zeta+1}=\beta_{n-\zeta}=\cdots=\beta_1\,.$ Then, after setting $w:=\beta_{r+1}\,\mbox{\ and\ }\\\,z:=\beta_r=\cdots=\beta_1$ (so that we have \ $r\,z= 2\pi\zeta -w -\sum\limits_{j=r+2}^n \beta_j$), we choose $\epsilon >0$ such that $w -r\cdot\epsilon > z +\,\epsilon\,,$ and we consider the $\zeta\mbox{-admissible} $ $n$-tuple $(\varphi_1, \cdots, \varphi_n)$ defined by \\ $\varphi_j:=z +\,\epsilon$ \ for  $j=1, \cdots, r\,,\ \varphi_{r+1}:= w - r\cdot\epsilon\,$ and $\varphi_h:=\beta_h$\, for \,$h=r+2, \cdots, n\,.$ \\ Taking into account Proposition \ref{m-valutation} (b), \ we obtain \ $m(\varphi_1, \cdots, \varphi_n) -\delta_n=\\ (n-\zeta) \big[(z+ \epsilon)^2 -z^2\big] +(r-n+\zeta)\big[(2\pi-z -\epsilon)^2-(2\pi-z)^2 \big]+ (2\pi-w +r\,\epsilon)^2- (2\pi-w)^2=\,\epsilon^2(r+ r^2 )+2\epsilon\big[(n-\zeta)z-(r-n+\zeta)(2\pi-z)+r(2\pi-w)\big]=\,\epsilon^2(r+ r^2 )+ 2\epsilon\big[r\,z +2\pi (n-\zeta)- r\,w\big] = \epsilon^2(r +r^2)+ 2\epsilon\big[2\pi n- (r+1)w -\sum\limits_{j=r+2}^n \beta_j\big]\,.$ Since \ $(r+1)w +\sum\limits_{j=r+2}^n \beta_j\le n\pi\,,$ then we get \,$m(\varphi_1, \cdots, \varphi_n) -\delta_n\ge \epsilon^2(r +r^2) + 2\epsilon n \pi\,>0\,,$ and this is impossible, bearing in mind the definition of $\delta_n$\,. So we conclude that (ii) holds.

iii) \ \ If condition (ib) holds (now also with $\zeta \ge 0$), \ then \ $\beta_2=\beta_3=\cdots=\beta_{n-1}=\beta_n\,.$

Otherwise, we have $K:=\big\{j\in \{2, 3, \cdots, n-\zeta-1\}: \beta_j<\beta_{n-\zeta}  \big\} \ne \emptyset\,,$ and so, called $t:=\max K\,,$ we fix a positive real number $\epsilon$ such that \,$-\pi< \beta_1 -\epsilon\,, \ \,\beta _t +\epsilon < \beta_{t+1}=\beta_{n-\zeta}$ and we consider the $\zeta\mbox{-admissible} $ $n$-tuple $(\sigma_1, \cdots, \sigma_n)$ defined by $\sigma_i:=\beta_i$\, for \,$i\ne 1, t\,,$ $\sigma_1:=\beta_1-\epsilon$\, and \,$\sigma_t:=\beta_t+\epsilon\,.$ 
From Proposition \ref{m-valutation}, we obtain \ $m(\sigma_1, \cdots, \sigma_n) -\delta_n= (\beta_1-\epsilon)^2-\beta_1^{\,2}+(\beta_t+\epsilon)^2-\beta_t^{\,2}=2\epsilon(\epsilon +\beta_t-\beta_1) >0\,,$ \,since \,$\beta_1\le \beta_t\,.$ Again, this contradicts the definition of $\delta_n$, so we conclude that (iii) is true.

iv) \ \ From (i), (ii), (iii), we deduce that, whatever the value of $\zeta \in \big[0,\lfloor \dfrac{n}{2} \rfloor\big]\cap\,\mathbb{Z}$, we necessarily have $-\pi<\beta_1\le \beta_2=\beta_3=\cdots=\beta_n\le \pi\,.$

Setting $ x_0:=\beta_2=\cdots=\beta_n\,,$ we have $\beta_1=2\pi\zeta -(n-1)x_0\,,$ where $\zeta \in \big[0,\lfloor \dfrac{n}{2} \rfloor\big]\cap\,\mathbb{Z}\ ,$ $-\pi < 2\pi\zeta -(n-1)x_0\, \le \, x_0 \, \le \, \pi\,;$  these  inequalities imply \ $ \dfrac{2 \zeta \pi}{n}\le x_0 < \dfrac{(2\zeta+1)\pi}{n-1}\ $  if \ $0 \le \zeta \le \lfloor \dfrac{n}{2} \rfloor -1\,$ \ and \ $ \dfrac{2 \zeta \pi}{n}\le x_0 \le \pi\,$ \ if $\zeta =\lfloor \dfrac{n}{2} \rfloor\,.$ \

In other words, necessarily the pair $(x_0, \zeta)$ belongs to the set  
$\Lambda$ defined by  

\vspace{.2cm}

$\Lambda:=\big\{(x, \vartheta): \  x \in [\dfrac{2 \vartheta \pi}{n}, \dfrac{(2\vartheta+1)\pi}{n-1})\,, \,\vartheta \in \big[0,\lfloor \dfrac{n}{2} \rfloor -1 \big]\cap\,\mathbb{Z}\, \ \mbox{\,\  \ or\ \ \,} x \in [\dfrac{2 \vartheta \pi}{n}, \pi]\,, \,\vartheta =\lfloor \dfrac{n}{2} \rfloor\big\}\,.$ 

\vspace{.2cm}

Conversely, it is easy to check that, for any $(x, \vartheta) \in \Lambda$,  the $n$-tuple\, $\big(2\pi\vartheta -(n-1)x, \, \underbrace{x, \cdots, x}_{n-1}\big)$ is $\vartheta\mbox{-admissible} $, with $\vartheta \in \big[0,\lfloor \dfrac{n}{2} \rfloor\big]\cap\,\mathbb{Z}$. Hence, if we define \\$F_{\vartheta}(x):= m\big(2\pi\vartheta -(n-1)x, \, \underbrace{x,\, \cdots,\, x}_{n-1}\big)\,,$  we have $\delta_n=F_{\zeta}(x_0)\ge F_{\vartheta}(x)\,, \mbox{\,for any} \,(x, \vartheta)  \in \Lambda\,.$ From Proposition \ref{m-valutation}, by means of easy calculations we obtain :

$F_{\vartheta}(x)=n(n-1)x^2-4\pi \vartheta n x+4\pi^2\vartheta(\vartheta+1)\,, \ \ \mbox{for every\ } \, (x, \vartheta) \in \Lambda\,.$ 

For any fixed $\vartheta \in \big[0,\lfloor \dfrac{n}{2} \rfloor-1\big]\cap\,\mathbb{Z}\,,\,$ the quadratic function $F_{\vartheta}$ reaches its absolute minimum at the point $\overline{x}_{\vartheta}=\dfrac{2\vartheta \pi}{n-1} \in [\dfrac{2 \vartheta \pi}{n}, \dfrac{(2\vartheta+1)\pi}{n-1})\,,\, \mbox{\ and\ we\ have}\,$
 
$ \overline{x}_{\vartheta}- \dfrac{2 \vartheta \pi}{n}=\dfrac{2\vartheta\pi}{n(n-1)} < \dfrac{\pi}{n-1}=\dfrac{(2\vartheta+1)\pi}{n-1}-\overline{x}_{\vartheta}\,;$ hence, for every $x \in [\dfrac{2 \vartheta \pi}{n}, \dfrac{(2\vartheta+1)\pi}{n-1})\,,$ there exists $\widehat{x}\in [\dfrac{2 \vartheta \pi}{n}, \dfrac{(2\vartheta+1)\pi}{n-1})\,$\, such that $F_{\vartheta}(\widehat{x}) > F_{\vartheta}(x)\,;$\, this implies that $\vartheta \ne \zeta\,,$\,for every $\vartheta \in \big[0,\lfloor \dfrac{n}{2} \rfloor-1\big]\cap\,\mathbb{Z}\,,\,$ and so we conclude that it must necessarily be $\zeta=\lfloor \dfrac{n}{2} \rfloor$\,. The function $F_{\zeta}(x)\,$ \, (with $\zeta=\lfloor \dfrac{n}{2} \rfloor$)\, is strictly decreasing on the interval $\big[\dfrac{2 \lfloor \dfrac{n}{2} \rfloor}{n} \pi, \pi\big]\,,$ so it has its unique maximum at the point $\dfrac{2 \lfloor \dfrac{n}{2} \rfloor \pi}{n}\,.$ \ We conclude that it must necessarily be $x_0=\dfrac{(n-1)\pi}{n}$ \ when $n$ is odd\,, and \ $x_0=\pi$ \  when $n$ is even.\\ So, with an easy calculation, we obtain that we necessarily have $\beta_1=\beta_2=\cdots=\beta_n=\pi\,$\, when $n$ is even, and $\beta_1=\beta_2=\cdots=\beta_n=\dfrac{(n-1)\pi}{n}\,$\, when $n$ is odd.

v) \ \ Since all matrices of $SU_n$ are diagonalizable, any $n$-tuple $(\,\underbrace{\lambda, \cdots, \lambda}_{n}\,)$ (with $\lambda \in [0, \pi]$ \,and \,$\dfrac{\lambda n}{2 \pi} \in \big[0 , \lfloor \dfrac{n}{2} \rfloor\,\big]\cap \mathbb{Z}\,$)  corresponds only to the matrix $e^{\lambda\textbf{i}} I_{_n} \in SU_n$. Therefore, taking into account Remarks-Definitions \ref{def-m(Q} (c), from (iv) we get that we have $m(Q)=\delta_n$ if and only if $Q=-I_{_n}$ when $n$ is even, and $Q=e^{\frac{(n-1)\pi\textbf{i}}{n}} I_{_n} \mbox{\ \ or\ \ } Q=e^{\frac{-(n-1)\pi\textbf{i}}{n}} I_{_n}  \mbox{\ when\ $n$\ is\ odd}\,.$  Hence, by means of Proposition \ref{m-valutation}, \  it is easy to check that \ $\delta_n=n\pi^2$ \,when $n$ is even, and \,$\delta_n=(n-\dfrac{1}{n})\pi^2$ \, when $n$ is odd; \,so the proof is complete.
\end{proof}

\section{Some geometrical properties of the Riemannian manifold $(SU_n, \phi)$}

\begin{thm}\label{distanza}
Let $d$ be the distance induced on $SU_n$ by the Frobenius metric $\phi$ and let $P, Q\in SU_n\,.$ With the same notation of Remarks \ref{prelim-Q-q} (c), without loss of generality
we can assume  $\zeta(P^*Q)\ge \zeta(Q^*P)$ $($so that $\zeta(P^*Q)\ge0$ by Remarks \ref{prelim-Q-q} $($c\,$))$; we denote by $\mu_1, \cdots , \mu_n$ the $n$ eigenvalues of $P^*Q$\, ordered so that $\ \arg(\mu_1) \le \arg(\mu_2) \le \cdots \le \arg(\mu_{n})\,$, and we set \ \ 
$\zeta:=\zeta(P^*Q)= \dfrac{1}{2 \pi} \sum\limits_{j=1}^n \arg(\mu_j) \ge 0\,.$ \ \ Then the following statements hold:

a) \ \ $d(P;Q)=$
$\left\lbrace
\begin{array}{cc}
 \sqrt{\sum\limits_{j=1}^{n} \big(\!\arg(\mu_j)\big)^2}\, \ \ \ \ \ \ \ \ \ \ \ \ \ \ \ \ \ \ \ \ \ \ \ \ \ \ \ \ \ \ \ \ \ \ \ \ \ \ \ \ \ \mbox{if\ $\zeta=0$}\,,\\
\!\!\sqrt{\sum\limits_{j=1}^{n-\zeta} \big(\!\arg(\mu_j)\big)^2 + \sum\limits_{j=n-\zeta+1}^{n} \big(2\pi-\arg(\mu_j)\big)^2}\,\ \ \ \ \ \ \ \ \mbox{if\ $\zeta\ge1$}
\end{array}\ \ ;
\right. $  

\smallskip

b) \ \ there exists a unique minimizing geodesic segment of \ $(SU_n, \phi)$ with endpoints $P$ and $Q$ \ if and only if  \ \ \ \ \ \ 
either  $\zeta=0$ 
\ \ \ \ \ \ or \ \ \ \ \ \  $\zeta\ge 1$ and $\mu_{n-\zeta} \ne \mu_{n-\zeta+1}\,;$ 
\smallskip

c) \  if $\zeta\ge 1$ \ and \ $\mu_{n-\zeta}=\mu_{n-\zeta+1}\,,$ \ \, 

after setting \ \ \ 
$\Theta(P^*Q)= \big\{X\in \mathfrak{su}_n : \exp(X)=P^*Q\,, \  \Vert X \Vert_{_\phi}= d(P,Q)\}\,,$

 $\nu_{_1}:=max\big\{j\in \{1,\cdots, n-\zeta\}: \mu_{n-\zeta+1-j}=\mu_{n-\zeta}\big\}\,$,

$\nu_{_2}:=max\big\{j\in \{1,\cdots, \zeta\}: \mu_{n-\zeta+j}=\mu_{n-\zeta+1}\big\}\,,$ 

the map: $X \mapsto \gamma(t):= P \exp(t X)\ \ (0 \leq t \leq 1)$ \ is a bijection from $\Theta(P^*Q)$ onto the set of minimizing geodesic segments of \ $(SU_n, \phi)$ with endpoints $P$ and $Q$, and  $\Theta(P^*Q)$ is a compact submanifold of $\mathfrak{su}_n$ diffeomorphic to the complex Grassmannian ${\bf Gr}(\nu_{_2}; \mathbb{C}^{(\nu_{_1}+\nu_{_2})})\,.$
\end{thm}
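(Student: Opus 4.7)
The plan is to reduce Theorem \ref{distanza} to the results already established for $m(Q)$ and $\Theta(Q)$ at the identity, by exploiting the left-invariance of the Frobenius metric so that the distance from $P$ to $Q$ becomes the distance from $I_n$ to $P^*Q$. Since $\phi$ is bi-invariant on $SU_n$, left translation by $P^*$ is an isometry carrying $P \mapsto I_n$ and $Q \mapsto P^*Q$, hence $d(P,Q)=d(I_n,P^*Q)=\sqrt{m(P^*Q)}$. Swapping $P$ and $Q$ if necessary (which preserves both the distance and the set of minimizing geodesic segments, up to time reversal) we may assume $\zeta(P^*Q)\ge \zeta(Q^*P)$, and Remarks \ref{prelim-Q-q}(c) then guarantees $\zeta:=\zeta(P^*Q)\ge 0$, exactly the regime covered by the earlier Propositions.

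For part (a), I would simply substitute $Q\leftarrow P^*Q$ into the two formulas of Proposition \ref{m-valutation}: the case $\zeta=0$ yields $d(P,Q)^2=\sum_{j=1}^n(\arg(\mu_j))^2$, while the case $\zeta\ge 1$ yields $d(P,Q)^2=\sum_{j=1}^{n-\zeta}(\arg(\mu_j))^2+\sum_{j=n-\zeta+1}^{n}(2\pi-\arg(\mu_j))^2$. Taking square roots produces the displayed formula.

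For parts (b) and (c), I would invoke Proposition \ref{distanza-punti} applied to $G=SU_n$: the assignment $X\mapsto \gamma(t)=P\exp(tX)$, $0\le t\le 1$, is a bijection between the set of $\mathfrak{su}_n$-logarithms of $P^*Q$ having Frobenius norm equal to $d(P,Q)$ and the set of minimizing geodesic segments of $(SU_n,\phi)$ with endpoints $P$ and $Q$. Since part (a) identifies this minimum with $\sqrt{m(P^*Q)}$, that set of minimizing logarithms is precisely $\Theta(P^*Q)$ as defined in the statement. The uniqueness in (b) and the Grassmannian structure in (c) are then immediate consequences of Proposition \ref{Theta(Q)} applied to $P^*Q$: its cases (a) and (b) give a single point when either $\zeta=0$, or $\zeta\ge 1$ with $\mu_{n-\zeta}\ne\mu_{n-\zeta+1}$, while its case (c) produces the diffeomorphism $\Theta(P^*Q)\cong\mathbf{Gr}(\nu_2;\mathbb{C}^{\nu_1+\nu_2})$ when $\zeta\ge 1$ and $\mu_{n-\zeta}=\mu_{n-\zeta+1}$, with $\nu_1,\nu_2$ defined exactly as here.

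In short, the proof is a clean assembly of three ingredients (bi-invariance, Proposition \ref{distanza-punti}, and Propositions \ref{m-valutation}--\ref{Theta(Q)}); there is no substantial new calculation to perform. The only mild subtlety, and hence the main bookkeeping point, is to verify that the symmetry-breaking assumption $\zeta(P^*Q)\ge \zeta(Q^*P)$ is indeed harmless: swapping $P$ and $Q$ replaces $P^*Q$ by its adjoint, which by Remarks \ref{prelim-Q-q}(b)--(c) satisfies $\zeta(Q^*P)=s(P^*Q)-\zeta(P^*Q)$, so after the swap the remaining hypothesis $\zeta\ge 0$ holds, while the isomorphism $\Psi(X)=-X$ from Remarks-Definitions \ref{def-m(Q}(c) shows that the set of minimizing geodesic segments (and hence the Grassmannian structure) is invariant under this swap.
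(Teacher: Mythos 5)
Your proposal is correct and follows essentially the same route as the paper: bi-invariance reduces $d(P,Q)$ to $d(I_n,P^*Q)=\sqrt{m(P^*Q)}$, after which part (a) is Proposition \ref{m-valutation} and parts (b), (c) are Propositions \ref{distanza-punti} and \ref{Theta(Q)} applied to $P^*Q$. Your extra check that the assumption $\zeta(P^*Q)\ge\zeta(Q^*P)$ is harmless (via $\zeta(Q^*P)=s(P^*Q)-\zeta(P^*Q)$ and the map $\Psi(X)=-X$) is a sound clarification of what the paper leaves implicit in the statement.
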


\begin{proof}
Since $\phi$ is bi-invariant, we have $\sqrt{m(P^*Q)}=d(I_{_n},P^*Q)=d(P,Q)\,;$  so part (a) follows from Proposition \ref{m-valutation}, while parts (b) and (c) follow from
 Propositions \ref{distanza-punti} and \ref{Theta(Q)}.
\end{proof}

\begin{thm}\label{diametral-points}
a) \ The diameter of $(SU_n, \phi)$ is

$\delta(SU_n, \phi)=$
$\left\lbrace
\begin{array}{cc}
\ \ \ \ \sqrt{n}\,\pi \ \ \ \ \ \ \ \ \ \ \ \ \ \ \ \ \mbox{if\ $n$\ is\ even\,}\\
\sqrt{n-\dfrac{1}{n}} \ \pi \ \ \ \ \ \ \ \ \ \ \ \ \mbox{if\ $n$\ is\ odd}
\end{array}\ \ .
\right. $

\smallskip

b) \ If $n$ is even  and $P$ is any matrix of $SU_n$,  then\  $-P$ is the unique diametral point  of  $P$ in $(SU_n,\phi)\,$  and the set of minimizing geodesic segments joining $P$ and $-P$ can be parametrized by the complex Grassmannian ${\bf Gr}(\frac{n}{2}; \mathbb{C}^{n})\,.$ 

c) \ If $n$ is odd and $P$ is any matrix of $SU_n$, then\ $P$ has precisely two diametral points $P^+$, $P^-$ in $(SU_n,\phi)\,,$ with\ \ $P^+=e^{\frac{(n-1)\pi\bf {i}}{n}} P$ \ and \ $P^-=e^{\frac{-(n-1)\pi\bf {i}}{n}} P\,,$ and the sets of minimizing geodesic segments joining $P$ with $P^+$ and $P$ with $P^-$  can both be parametrized by the complex Grassmannian ${\bf Gr}(\frac{n-1}{2}; \mathbb{C}^{n})\,.$ 
\end{thm}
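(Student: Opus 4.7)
The plan for part (a) is immediate: Proposition \ref{diameter} gives the two explicit values of $\delta_n$, and by Remarks-Definitions \ref{def-m(Q} (d) we have $\delta(SU_n, \phi) = \sqrt{\delta_n}$, so taking square roots yields $\sqrt{n}\,\pi$ for $n$ even and $\sqrt{n - 1/n}\,\pi$ for $n$ odd.

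For parts (b) and (c), I would first identify the diametral points. Fix $P \in SU_n$. Bi-invariance of $\phi$ gives $d(P, Q) = d(I_n, P^*Q) = \sqrt{m(P^*Q)}$, so $Q$ is diametral with $P$ precisely when $m(P^*Q) = \delta_n$. The map $Q \mapsto P^*Q$ is a bijection of $SU_n$, so these diametral points are in bijection with the matrices in $SU_n$ at which $m$ attains its maximum, which Proposition \ref{diameter} lists explicitly: $-I_n$ alone when $n$ is even, and $e^{\pm(n-1)\pi\textbf{i}/n}I_n$ when $n$ is odd (both lie in $SU_n$ because their determinants equal $e^{\pm(n-1)\pi\textbf{i}} = 1$, since $n - 1$ is even). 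Hence the unique diametral point of $P$ is $-P$ in the even case, and the two diametral points are $P^{\pm} = e^{\pm(n-1)\pi\textbf{i}/n}P$ in the odd case.

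To obtain the Grassmannian parametrization, I would apply Theorem \ref{distanza} (c) to each diametral pair. In every case $P^*Q$ is a scalar matrix $\lambda I_n$, so all its eigenvalues coincide; in particular $\mu_{n-\zeta} = \mu_{n-\zeta+1}$ automatically, placing us in case (c) of the theorem, with $\nu_1$ and $\nu_2$ determined only by $\zeta$. For $n$ even and $Q = -P$: $\arg(\mu_j) = \pi$ for all $j$, so $\zeta = n/2$, $n - \zeta = n/2$, and eigenvalue coincidence gives $\nu_1 = \nu_2 = n/2$, yielding $\mathbf{Gr}(n/2; \mathbb{C}^n)$. For $n$ odd and $Q = P^+$: the common argument is $(n-1)\pi/n \in (0, \pi)$, so $\zeta = (n-1)/2$, $n - \zeta = (n+1)/2$, $\nu_1 = (n+1)/2$, $\nu_2 = (n-1)/2$, yielding $\mathbf{Gr}((n-1)/2; \mathbb{C}^n)$. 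For $Q = P^-$ one has $\zeta(P^*P^-) = -(n-1)/2 < (n-1)/2 = \zeta((P^-)^*P)$, so the convention $\zeta(P^*Q) \geq \zeta(Q^*P)$ in Theorem \ref{distanza} requires exchanging the roles of $P$ and $P^-$; since a minimizing segment joining $P$ and $P^-$ is simply the time-reversal of one joining $P^-$ and $P$, the two parametrizations are canonically identified and the same Grassmannian $\mathbf{Gr}((n-1)/2; \mathbb{C}^n)$ is recovered.

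The hard part will be only the bookkeeping: correctly evaluating $\nu_1, \nu_2$ when all eigenvalues of $P^*Q$ coincide, and handling the sign of $\zeta$ in the $P^-$ case so as to invoke Theorem \ref{distanza} with the required orientation. All substantive content is already encapsulated in Proposition \ref{diameter} and Theorem \ref{distanza}, so the proof will be a purely mechanical assembly of those two results.
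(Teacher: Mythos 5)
Your proposal is correct and takes essentially the same route as the paper: its proof likewise reduces everything to Proposition \ref{diameter} together with Theorem \ref{distanza}, observing that for diametral pairs the constants in Theorem \ref{distanza}\,(c) are $\zeta=\nu_{_2}=\lfloor \frac{n}{2} \rfloor$ and $\nu_{_1}=n-\lfloor \frac{n}{2} \rfloor$, which are exactly the values you compute. Your explicit treatment of the $P^-$ case (exchanging the roles of $P$ and $P^-$ to meet the convention $\zeta(P^*Q)\ge\zeta(Q^*P)$ and identifying time-reversed segments) is a detail the paper leaves implicit but is fully consistent with its argument.
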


\begin{proof}
The Theorem follows easily from Proposition \ref{diameter} and  Theorem \ref{distanza}, taking into account that, in case of diametral pairs, the values of the constants in part (c) of Theorem \ref{distanza} \ are : 
$\zeta=\nu_{_2}=\lfloor \dfrac{n}{2} \rfloor\,, \ \ \nu_{_1}=n-\lfloor \dfrac{n}{2} \rfloor\,.$ 
\end{proof}

\end{document}